\numberwithin{equation}{section}
\newtheorem{Theorem}{Theorem}
\newtheorem{Lemma}[Theorem]{Lemma}
\newtheorem{Proposition}[Theorem]{Proposition}
\newtheorem{Corollary}[Theorem]{Corollary}
\newcommand{\be}{\begin{equation}}
\newcommand{\ee}{\end{equation}}
\newcommand{\phii}{\varphi}
\newcommand{\e}{\varepsilon}
\begin{document}

\title[Characterization of potential smoothness]
{Characterization of potential smoothness and Riesz basis property
of the Hill-Scr\"odinger operator in terms of periodic, antiperiodic
and Neumann spectra}

\author{Ahmet Batal}
\address{Sabanci University, Orhanli,
34956 Tuzla, Istanbul, Turkey}

\email{ahmetbatal@sabanciuniv.edu}

\begin{abstract}
The Hill operators $Ly=-y''+v(x)y$, considered with complex valued
$\pi$-periodic potentials $v$ and subject to periodic, antiperiodic
or Neumann boundary conditions have discrete spectra. For
sufficiently large $n,$ close to $n^2$ there are two periodic (if
$n$ is even) or antiperiodic (if $n$ is odd) eigenvalues
$\lambda_n^-$, $\lambda_n^+$ and one Neumann eigenvalue $\nu_n$. We
study the geometry of ``the spectral triangle'' with vertices
($\lambda_n^+$,$\lambda_n^-$,$\nu_n$), and show that the rate of
decay of triangle size characterizes the potential smoothness.
Moreover, it is proved, for  $v\in L^p ([0,\pi]), \; p>1,$ that the
set of periodic (antiperiodic) root functions contains a Riesz basis
if and only if for even $n$ (respectively, odd $n$)  $ \;
\sup_{\lambda_n^+\neq
\lambda_n^-}\{|\lambda_n^+-\nu_n|/|\lambda_n^+-\lambda_n^-| \} <
\infty. $
\vspace{1mm}\\
{\it Keywords}: Hill-Schr\"odinger operators, potential smoothness,
Riesz bases\vspace{1mm} \\
{\em 2010 Mathematics Subject Classification:} 47E05, 34L40.

\end{abstract}

\maketitle

\section{Introduction}
The Schr\"odinger operator
 \be
 \label{1}
  L y= -y''+v(x)y, \quad \quad \quad x \in \mathbb{R},
 \ee
considered with a real-valued $\pi$-periodic potential $v \in
L^2([0,\pi]),$ is self-adjoint and its spectrum has a band-gap
structure, i.e., it consists of intervals $[\lambda^+_{n-1},
\lambda^-_{n}]$ separated by spectral gaps (instability zones)
$(\lambda^-_{n}, \lambda^+_{n})$, $n \in \mathbb{N}$. The Floquet
theory (e.g., see \cite{E}) shows that the endpoints $\lambda^-_{n}$,
$\lambda^+_{n}$ of these gaps are eigenvalues of the same
differential operator (\ref{1}) but considered on the interval
$[0,\pi]$ with periodic boundary conditions for even $n$ and
antiperiodic boundary conditions for odd $n$.

Hochstadt \cite{Ho1, Ho2} discovered that there is a close relation
between the rate of decay of the {\em spectral gap}
$\gamma_n=\lambda^+_n-\lambda^-_n$ and the smoothness of the
potential $v$. He proved that every finite zone potential is a
$C^\infty $-function, and moreover, \emph{if v is infinitely
differentiable then $\gamma_n$ decays faster than any power of
$1/n.$} Later several authors \cite{LP}-\cite{MT} studied this
phenomenon and showed that \emph{if $\gamma_n$ decays faster than any
power of $1/n,$ then v is infinitely differentiable}. Moreover,
Trubowitz \cite{Tr} proved that \emph{$v$ is analytic if and only if
$\gamma_n$ decays exponentially fast}.

If $v$ is a complex-valued function then the operator (\ref{1}) is
not self-adjoint and we cannot talk about spectral gaps. But
$\lambda^\pm_n $ are well defined for sufficiently large $n$ as
eigenvalues of (\ref{1}) considered on the interval $[0,\pi]$ with
periodic or antiperiodic boundary conditions, so we set again
$\gamma_n = \lambda_n^+ - \lambda_n^-$ and call it spectral gap.
Again the potential smoothness determines the decay rate of
$\gamma_n,$ but in general the opposite is not true. The decay rate
of $\gamma_n$ has no control on the smoothness of a complex valued
potential $v$ by itself as the Gasymov paper \cite{Gas} shows.

Tkachenko \cite{Tk92}--\cite{Tk94} discovered that the smoothness of
complex potentials could be controlled if one consider, together
with the spectral gap $\gamma_n,$ the deviation
$\delta^{Dir}_n=\lambda^+_n-\mu_n$, where $\mu_n$ is the closest to
$n^2$ Dirichlet eigenvalue of $L.$ He characterized in these terms
the $C^{\infty}$-smoothness and analyticity of complex valued
potentials $v.$  Moreover, Sansuc and Tkachenko \cite{ST} showed
that $v$ is in the Sobolev space $H^a$, $a\in \mathbb{N}$ if and
only if $\gamma_n $ and $\delta^{Dir}_n$ are in the weighted
sequence space $\ell_a^2=\ell^2((1+n^2)^{a/2})$.

The above results have been obtain by using Inverse Spectral Theory.
Kappeler and Mityagin \cite{KaMi01} suggested another approach based
on Fourier Analysis. To formulate their results, let us recall that
the smoothness of functions could be characterized by weights $\Omega
= (\Omega (k)),$  and the corresponding weighted Sobolev spaces are
defined by
$$
H(\Omega) = \{v(x) = \sum_{k \in \mathbb{Z}} v_k e^{2ikx}, \quad
\sum_{k \in \mathbb{Z}} |v_k|^2 (\Omega(k))^2 < \infty \}.
$$
A weight $\Omega $ is called sub-multiplicative, if $ \Omega (-k) =
\Omega (k) $ and $ \Omega (k+m) \leq \Omega (k)\Omega (m) $ for $ k,m
\geq 0. $ In these terms the main result in \cite{KaMi01} says that
if $\Omega $ is a sub-multiplicative weight, then
 \be
  \label{i-ra}
 (A) \quad v\in H(\Omega) \quad \Longrightarrow \quad  (B) \quad
   (\gamma_n) ,  \left (\delta^{Dir}_n \right ) \in
  \ell^2(\Omega).
 \ee
Djakov and Mityagin \cite{DM3,DM5, DM15} proved the inverse
implication $(B) \Rightarrow (A)$  under some additional mild
restrictions on the weight $\Omega. $  Similar results were obtained
for 1D Dirac operators (see \cite{DM7,DM6,DM15}).

The analysis in \cite{KaMi01, DM3, DM5, DM15} is carried out under
the assumption $v\in L^2([0,\pi]).$  Using quasi-derivatives, Djakov
and Mityagin \cite{DM16} developed a Fourier method for studying the
spectra of $L_{Per^\pm}$ and $L_{Dir}$ in the case of periodic
singular potentials and  extended the above results. They proved that
if  $v \in H^{-1}_{per} (\mathbb{R})$ and $\Omega$ is a weight of the
form $\Omega(m)=\omega(m)/|m|$ for  $m \neq 0,$ with $\omega$ being a
sub-multiplicative weight, then  $(A) \Rightarrow (B),  $ and
conversely, if in addition $(\log \omega (n))/n$ decreases to zero,
then $(B) \Rightarrow (A)$ (see Theorem 37 in \cite{DMH}).

A crucial step in proving the implications $(A) \Rightarrow (B)$ and
$(B) \Rightarrow (A)$ is the following statement (which comes from
Lyapunov-Schmidt projection method, e.g., see Lemma 21 in
\cite{DM15}): {\em For large enough $n,$ the number $\lambda = n^2 +
z, \; |z| <n/4 $ is a periodic or antiperiodic eigenvalue if and only
if $z$ is an eigenvalue of the matrix} $
\begin{pmatrix} \alpha_n(z) & \beta^+_n(z) \\ \beta^-_n(z) &
\alpha_n(z)\end{pmatrix}.$ The entrees of this matrix
$\alpha_n(z)=\alpha_n(z;v)$ and $\beta^{\pm}_n(z)=\beta^{\pm}_n(z;v)$
are given by explicit expressions in terms of the Fourier
coefficients of the potential $v$ and depend analytically on $z$ and
$v.$  The functionals $\beta^\pm_n $ give lower and  upper bounds for
the gaps and deviations (e.g., see Theorem 29 in  \cite{DMH}): {If
$v \in H^{-1}_{per} (\mathbb{R})$ then, for sufficiently large} $n,$
\begin{equation}
\label{A1} \frac{1}{72}(|\beta_n^+(z_n^*)|+|\beta_n^-(z_n^*)|) \leq
|\gamma_n|+|\delta^{Dir}_n| \leq 58
 (|\beta_n^+(z_n^*)|+|\beta_n^-(z_n^*)|),
\end{equation}
where $z_n^*=  \frac{1}{2}(\lambda^+_n + \lambda^-_n) - n^2.$ Thus,
the implications $(A) \Rightarrow (B)$ and $(B) \Rightarrow (A)$ are
equivalent, respectively, to
\begin{equation}
\label{A2} (\tilde{A}):\quad v\in H(\Omega) \quad \Longrightarrow
\quad (\tilde{B}): \quad (|\beta_n^+(z_n^*)|+|\beta_n^-(z_n^*)|)\in
\ell^2 (\Omega),
\end{equation}
and $(\tilde{B}) \Rightarrow (\tilde{A}).$ In this way the problem of
analyzing the relationship between potential smoothness and decay
rate of the sequence $(|\gamma_n| + |\delta_n^{Dir}|)$ is reduced to
analysis of the functionals $\beta^{\pm}_n (z).$

The asymptotic behavior  $\beta^{\pm}_n (z)$ (or $\gamma_n$ and
$\delta^{Dir}_n$) plays also a crucial role in studying the Riesz
basis property of the system of root functions of the operators
$L_{Per^\pm}.$ In \cite[Section 5.2]{DM15}, it is shown (for
potentials $v \in L^2 ([0,\pi])$) that if the ratio $\beta^+_n
(z_n^*)/\beta^-_n (z_n^*) $ is not separated from $0$ or $\infty$
then the system of root functions of $L_{Per^\pm}$ does not contain a
Riesz basis (see Theorem 71 and its proof therein). Theorem 1 in
\cite{DM25} (or Theorem 2 in \cite{DM25a}) gives, for wide classes of
$L^2$-potentials,  a criterion for Riesz basis property in the same
terms. In its most general form, for singular potentials, this
criterion reads as follows (see Theorem 19 in \cite{DM28}):

{\em Criterion 1. Suppose $v\in H^{-1}_{per}(\mathbb{R});$ then the
set of root functions of $L_{Per^\pm}(v)$ contains Riesz bases if
and only if
\begin{equation}
\label{DM1} 0 < \inf_{\gamma_n\neq 0} |\beta^-_n (z_n^*)|/|\beta^+_n
(z_n^*)|, \quad \sup_{\gamma_n\neq 0} |\beta^-_n (z_n^*)|/|\beta^+_n
(z_n^*)|< \infty,
\end{equation}
where $n$ is even (respectively odd) in the case of periodic
(antiperiodic) boundary conditions.}

In \cite{GT11}  Gesztesy and Tkachenko obtained the following result.

{\em Criterion 2.   If $v \in L^2 ([0,\pi]),$ then there is a Riesz
basis consisting of root functions of the operator $L_{Per^\pm}(v)$
if and only if
\begin{equation}
\label{GT1} \sup_{\gamma_n\neq 0}|\delta^{Dir}_n|/|\gamma_n|<\infty,
\end{equation}
where $n$ is even (respectively odd) in the case of periodic
(antiperiodic) boundary conditions.}

They also noted that a similar criterion holds if (\ref{GT1}) is
replaced by
\begin{equation}
\label{GT2} \sup_{\gamma_n\neq 0}|\delta^{Neu}_n|/|\gamma_n|<\infty,
\end{equation}
where $\delta^{Neu}_n=\lambda^+_n - \nu_n $ and $\nu_n $ is the $n$th
Neumann eigenvalue.

Djakov and Mityagin \cite[Theorem 24]{DM28} proved, for potentials
$v\in H^{-1}_{per}(\mathbb{R}),$ that the conditions ({\ref{DM1}) and
(\ref{GT1}) are equivalent, so (\ref{GT1}) gives necessary and
sufficient conditions for Riesz basis property for singular
potentials as well.

However, if $v \not \in L^2 ([0,\pi])$ (say, for potentials $v \in
L^1 ([0,\pi])$ or  $v\in H^{-1}_{per}(\mathbb{R})$) \emph{it is not
known whether (\ref{GT2}) is equivalent to Riesz basis property.}

In this paper, we show for potentials $v\in L^p([0,\pi]), $ $ p>1,$
that the Neumann version of Criterion 2 holds, and the potential
smoothness could be characterized by the decay rate of $ |\gamma_n|
+ |\delta_n^{Neu}|.$ More precisely, the following two theorems
hold.

\begin{Theorem}
\label{ThmNeu}  Suppose $v\in L^p([0,\pi]), \; p>1$ and $\Omega$  is
a weight of the form $\Omega(m)=\omega(m)/m$ for  $m \neq 0,$ where
$\omega$ is a sub-multiplicative weight. Then
\begin{equation}
\label{B10} v \in H(\Omega) \Longrightarrow (|\gamma_n|),
(|\delta^{Neu}_n|) \in \ell^2 (\Omega);
\end{equation}
conversely, if in addition $(\log \omega (n))/n$ decreases to zero,
then
\begin{equation}
\label{B20}  (|\gamma_n|), (|\delta^{Neu}_n|) \in \ell^2
(\Omega)\Longrightarrow v \in H(\Omega).
\end{equation}
If $ \lim \frac{\log \omega (n)}{n} >0,$ (i.e. $\omega $ is of
exponential type), then
\begin{equation}
\label{B30} (\gamma_n), (\delta^{Neu}_n) \in \ell^2 (\Omega) \;
\Rightarrow \; \exists  \varepsilon >0: \; v \in H(e^{\varepsilon
|n|} ).
\end{equation}
\end{Theorem}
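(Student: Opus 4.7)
The plan is to establish a Neumann analogue of the two-sided estimate~\eqref{A1}, namely
\begin{equation*}
c\bigl(|\beta^+_n(z_n^*)| + |\beta^-_n(z_n^*)|\bigr) \leq |\gamma_n| + |\delta^{Neu}_n| \leq C\bigl(|\beta^+_n(z_n^*)| + |\beta^-_n(z_n^*)|\bigr),
\end{equation*}
valid for all sufficiently large $n$. Once this is in hand, the three implications~\eqref{B10},~\eqref{B20},~\eqref{B30} reduce immediately to the equivalence $(\tilde A)\Leftrightarrow(\tilde B)$ from~\eqref{A2} (and its exponential refinement), which is already known in the form $(A)\Leftrightarrow(B)$ for the Dirichlet deviation by~\cite{KaMi01, DM3, DM5, DM15, DM16}.

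To produce such a bound I would first develop a Lyapunov--Schmidt reduction for the Neumann problem parallel to Lemma~21 of~\cite{DM15}, projecting onto the span of $\cos(nx)$ rather than the periodic/antiperiodic exponentials. This yields a scalar equation $\alpha^N_n(z)=0$ whose coefficient can, after a direct calculation in the Fourier cosine basis, be rewritten in terms of the very same functionals $\alpha_n(z)$, $\beta^\pm_n(z)$ governing the $2\times 2$ periodic/antiperiodic reduction; schematically one expects
\begin{equation*}
\nu_n - n^2 \;=\; \alpha_n(z^N_n) \;-\; \sqrt{\beta^+_n(z^N_n)\,\beta^-_n(z^N_n)} \;+\; O\bigl(n^{-1}(|\beta^+_n|+|\beta^-_n|)\bigr),
\end{equation*}
to be compared with $\lambda^\pm_n - n^2 = \alpha_n(z^\pm_n) \pm \sqrt{\beta^+_n(z^\pm_n)\beta^-_n(z^\pm_n)}$. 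The upper bound on $|\gamma_n|+|\delta^{Neu}_n|$ then follows by subtraction, together with the Lipschitz dependence of $\alpha_n$ and $\beta^\pm_n$ on $z$ on the disc $|z|<n/4$.

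The main obstacle is the matching lower bound, because for genuinely complex potentials $|\beta^+_n|$ and $|\beta^-_n|$ can differ drastically and the product $\sqrt{\beta^+_n\beta^-_n}$ can be much smaller than $|\beta^+_n|+|\beta^-_n|$. Following the dichotomy used in~\cite[\S5.2]{DM15} and in the Riesz-basis analysis of~\cite{DM25,DM25a,DM28}, I would split into cases according to which of $|\beta^+_n|$, $|\beta^-_n|$ dominates and exploit the distinct algebraic structure of $\gamma_n\approx 2\sqrt{\beta^+_n\beta^-_n}$ versus $\delta^{Neu}_n$, showing that in each case at least one of the two is of order $|\beta^+_n|+|\beta^-_n|$. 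A secondary, technical point is that the cosine-basis Fourier analysis required to define $\alpha^N_n$ does not extend painlessly to $H^{-1}_{per}$: the hypothesis $v\in L^p$, $p>1$, is precisely what is needed to make sense of the relevant boundary pairings and to obtain uniform, $n$-independent remainder estimates. With the two-sided bound established, \eqref{B10} and~\eqref{B20} follow from~\cite{DMH} under the stated weight conditions, and~\eqref{B30} from the exponential-type version of $(\tilde B)\Rightarrow(\tilde A)$ proved there.
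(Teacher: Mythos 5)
Your high-level strategy --- establish a two-sided comparison between $|\gamma_n|+|\delta^{Neu}_n|$ and $|\beta^+_n(z_n^*)|+|\beta^-_n(z_n^*)|$, then transfer the known implications --- is exactly the paper's reduction: this comparison is Theorem~\ref{Asymp}, and once it holds simultaneously with~\eqref{A1}, the Neumann and Dirichlet deviation sequences become asymptotically equivalent and Theorem~\ref{ThmNeu} follows from Theorem~37 of~\cite{DMH}. So you have identified the right pivot.

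However, your proposed route to that key estimate is both different from the paper's and, as stated, does not work. You claim a Neumann Lyapunov--Schmidt reduction would yield, schematically,
$\nu_n - n^2 = \alpha_n(z^N_n) - \sqrt{\beta^+_n\beta^-_n} + O\bigl(n^{-1}(|\beta^+_n|+|\beta^-_n|)\bigr)$.
If this held, then $\delta^{Neu}_n=\lambda^+_n-\nu_n$ would be approximately $\gamma_n$, so $|\gamma_n|+|\delta^{Neu}_n|$ would be of size $|\sqrt{\beta^+_n\beta^-_n}|$; but when one of $|\beta^+_n|,|\beta^-_n|$ is much smaller than the other (precisely the regime you flag as ``the main obstacle''), that product is much smaller than $|\beta^+_n|+|\beta^-_n|$, directly contradicting the lower bound in~\eqref{A10}. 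So the schematic formula is inconsistent with the statement being proved, and no amount of case analysis afterwards can repair it. The ``direct calculation in the Fourier cosine basis'' that would relate the Neumann scalar reduction to the periodic $2\times2$ data is also glossed over, and it is genuinely delicate: the two reductions use different free resolvents, so there is no simple algebraic substitution linking $\alpha^N_n$ to $\alpha_n,\beta^\pm_n$.

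The paper avoids a Neumann Lyapunov--Schmidt reduction altogether. Working with the invariant subspace $\mathcal{E}_n=\mathrm{Ran}\,P_n$ of $L_{Per^\pm}$ and the basis $\{f_n,\varphi_n\}$ satisfying~\eqref{f}--\eqref{phi}, it constructs a unit vector $G=af+b\varphi\in\mathcal{E}_n$ that lies in the Neumann domain ($d_0(G)=d_\pi(G)=0$), pairs against the Neumann eigenfunction, and derives the exact algebraic identity~\eqref{dere}:
$\langle G,\bar g\rangle\,\delta^{Neu}_n = b\langle\varphi,\bar g\rangle\gamma_n - b\langle f,\bar g\rangle\xi_n$.
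This shows $\delta^{Neu}_n$ is controlled by a combination of $\gamma_n$ \emph{and} the off-diagonal entry $\xi_n$, and it is $\xi_n$ --- not $\gamma_n$ alone --- that carries the magnitude $|\beta^+_n|+|\beta^-_n|$ when $\sqrt{\beta^+_n\beta^-_n}$ is small (Lemma~\ref{x+g-b+b}). The remaining work is analytic: the $2\to\infty$ estimates on $P_n-P^0_n$ and $D(P_n-P^0_n)$ in Proposition~\ref{Proposition} (this is where the hypothesis $v\in L^p$, $p>1$, and the Hausdorff--Young inequality enter) force $\langle G,\bar g\rangle\geq 71/72$ for large $n$, and the dichotomy on $|\beta^+_n|/|\beta^-_n|$ together with Lemma~\ref{LLL} controls $|a|,|b|$ from below. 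To salvage your approach you would need to correctly compute the Neumann reduction, and you should expect the resulting expression for $\nu_n-n^2$ to contain a term of the order of $|\beta^+_n|+|\beta^-_n|$ (not merely of the geometric mean), matching the role $\xi_n$ plays in the paper.
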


 \begin{Theorem}
 \label{Theorem}
  If $v \in L^p ([0,\pi]), \; p>1,$ then there is a Riesz
basis consisting of root functions of the operator $L_{Per^\pm}(v)$
if and only if
\begin{equation}
\label{GT10} \sup_{\gamma_n\neq 0}|\delta^{Neu}_n|/|\gamma_n|<\infty,
\end{equation}
where $n$ is respectively even (odd) for periodic (antiperiodic)
boundary conditions.
 \end{Theorem}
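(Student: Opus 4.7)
The plan is to deduce Theorem \ref{Theorem} from Criterion 1 of Djakov--Mityagin (which is stated for $v\in H^{-1}_{per}(\mathbb{R})$ and therefore applies a fortiori to $v\in L^p([0,\pi])$, $p>1$) by reducing condition (\ref{GT10}) to condition (\ref{DM1}). Concretely, the target is the algebraic-asymptotic equivalence
\begin{equation}
\label{Peq}
\sup_{\gamma_n\neq 0}\frac{|\delta^{Neu}_n|}{|\gamma_n|}<\infty \ \Longleftrightarrow\ 0<\inf_{\gamma_n\neq 0}\frac{|\beta^-_n(z^*_n)|}{|\beta^+_n(z^*_n)|}\leq \sup_{\gamma_n\neq 0}\frac{|\beta^-_n(z^*_n)|}{|\beta^+_n(z^*_n)|}<\infty,
\end{equation}
which together with Criterion 1 yields the theorem.

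The first step is a Lyapunov--Schmidt reduction for the Neumann problem parallel to the one recalled for $L_{Per^\pm}$. The free Neumann eigenfunctions $\cos nx=\tfrac{1}{2}(e^{inx}+e^{-inx})$ form the symmetric combination of the two-dimensional periodic resonance subspace, so the Neumann resonance subspace at level $n$ is one-dimensional and the reduced equation has scalar form $z=\alpha_n(z)+\tfrac{1}{2}(\beta^+_n(z)+\beta^-_n(z))+\varrho_n(z)$, with a remainder $\varrho_n(z)$ that has to be shown to be of strictly smaller order than $|\beta^+_n|+|\beta^-_n|$. I would build this reduction by adapting the Fourier analysis of \cite{KaMi01, DM15, DMH} to the cosine basis: write $y=y_1\cos nx+\phi$ with $\phi\perp\cos nx$, solve the off-resonance equation for $\phi$ as a Neumann series in the free resolvent, and then project what remains onto $\cos nx$.

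Combining this with the known periodic asymptotic $\lambda^\pm_n-n^2\approx\alpha_n(z^*_n)\pm\sqrt{\beta^+_n(z^*_n)\beta^-_n(z^*_n)}$ and transferring all functionals to the common base point $z^*_n$ (using the smooth dependence of $\alpha_n,\beta^\pm_n$ on $z$) gives
\begin{equation}
\label{Palg}
\delta^{Neu}_n=\lambda^+_n-\nu_n=-\tfrac{1}{2}\bigl(\sqrt{\beta^+_n(z^*_n)}-\sqrt{\beta^-_n(z^*_n)}\bigr)^{2}+o(|\beta^+_n|+|\beta^-_n|),
\end{equation}
while $\gamma_n=2\sqrt{\beta^+_n(z^*_n)\beta^-_n(z^*_n)}(1+o(1))$. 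Setting $w_n:=\sqrt{\beta^-_n(z^*_n)/\beta^+_n(z^*_n)}$ with a consistent branch choice and dividing,
\begin{equation}
\label{Pquot}
\frac{|\delta^{Neu}_n|}{|\gamma_n|}=\frac{|w_n-1|^{2}}{4|w_n|}(1+o(1)).
\end{equation}
An elementary real-variable lemma then finishes (\ref{Peq}): the map $w\mapsto|w-1|^{2}/|w|$ on $\mathbb{C}\setminus\{0\}$ is bounded on a subset $S$ if and only if $|w|$ is bounded away from both $0$ and $\infty$ on $S$.

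The main obstacle is the first step, namely establishing the Neumann Lyapunov--Schmidt equation with the required control on $\varrho_n$ for $v\in L^p$ with $1<p<2$. For $p\geq 2$ the $L^2$-based Fourier machinery of \cite{KaMi01, DM3, DM15} essentially applies; the delicate range is $1<p<2$, where one has to invoke the singular-potential framework of \cite{DM16, DMH}, embed $L^p\hookrightarrow H^{-s}_{per}$ for some small $s>0$, and verify that the pointwise Neumann conditions $y'(0)=y'(\pi)=0$ still make sense and produce the same reduced form. The crucial estimate to establish is $\varrho_n=o(|\beta^+_n|+|\beta^-_n|)$ uniformly in $n$; once this is in place, (\ref{Palg}), (\ref{Pquot}) and the elementary boundedness lemma complete the proof with no further analytic input beyond Criterion 1.
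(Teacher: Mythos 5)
Your plan has a genuine gap, and it is larger than the one you flag. You propose to obtain $\delta^{Neu}_n$ from a one-dimensional Lyapunov--Schmidt reduction in the cosine basis, and you write the reduced equation as $z=\alpha_n(z)+\tfrac12(\beta^+_n(z)+\beta^-_n(z))+\varrho_n(z)$. But $\alpha_n,\beta^\pm_n$ are the matrix elements of the reduced operator for the \emph{periodic/antiperiodic} reduction, taken with respect to the exponential resonance pair $\{e^{inx},e^{-inx}\}$ and with $Q^0$ the complementary projection in the \emph{periodic} off-resonance space. The Neumann reduction has a one-dimensional resonance space spanned by $\cos nx$, but its off-resonance space includes $\sin nx$ (and, for singular $v$, a different closure altogether), so the Neumann series that produces the reduced scalar is built from \emph{different} resolvents and couplings. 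It is not simply the projection of the $2\times2$ matrix onto the symmetric vector $(1,1)/\sqrt2$; in general that vector is not an eigenvector of $\begin{pmatrix}\alpha&\beta^+\\\beta^-&\alpha\end{pmatrix}$ and the resulting scalar is not $\alpha+\tfrac12(\beta^++\beta^-)$. You would have to build the Neumann functional from scratch and then relate it to $\alpha_n,\beta^\pm_n$; neither step is routine. The second unacknowledged problem is your use of multiplicative errors: $\gamma_n=2\sqrt{\beta^+_n\beta^-_n}(1+o(1))$ and display \eqref{Palg} are not available as uniform asymptotics. The characteristic relation $(z-\alpha_n(z))^2=\beta^+_n(z)\beta^-_n(z)$ is implicit, and the only control one has uniformly in $n$ is a two-sided comparison, not an expansion with a relative $o(1)$. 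In the anisotropic regime $|\beta^+_n|\ll|\beta^-_n|$ one has $\gamma_n=o(|\beta^+_n|+|\beta^-_n|)$, so dividing an additive error of size $o(|\beta^+_n|+|\beta^-_n|)$ by $\gamma_n$ does \emph{not} give an $o(1)$ factor, and the clean equality \eqref{Pquot} does not follow from \eqref{Palg}. (The boundedness equivalence can still be squeezed out with a case analysis, but your write-up as stated does not deliver it.)

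The paper avoids all of this by not performing a Neumann Lyapunov--Schmidt reduction at all. Instead it observes that the vector $\tilde G=d_0(\varphi_n)f_n-d_0(f_n)\varphi_n$, built inside the periodic invariant subspace $\mathcal{E}_n=\mathrm{Ran}\,P_n$, automatically satisfies the Neumann boundary conditions; pairing $L_{Neu}G=LG$ against the conjugate Neumann eigenvector $\bar g$ and using $(L_{Neu}(v))^*=L_{Neu}(\bar v)$ yields the scalar identity $\langle G,\bar g\rangle\,\delta^{Neu}_n=b\langle\varphi_n,\bar g\rangle\gamma_n-b\langle f_n,\bar g\rangle\xi_n$. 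The quantitative input is then the resolvent-difference estimate $\|P_n-P^0_n\|_{2\to\infty}\le M/n$ and $\|D(P_n-P^0_n)\|_{2\to\infty}\le M$, which forces $\langle G,\bar g\rangle\ge 71/72$ and, in the Case 2 regime, $|a_n b_n|\ge 4/17$. This gives the two-sided comparison $|\gamma_n|+|\delta^{Neu}_n|\asymp|\beta^+_n(z^*_n)|+|\beta^-_n(z^*_n)|$ (Theorem~\ref{Asymp}), which together with the known Dirichlet comparison \eqref{A1} makes $\sup|\delta^{Neu}_n|/|\gamma_n|<\infty$ equivalent to $\sup|\delta^{Dir}_n|/|\gamma_n|<\infty$, and Theorem~24 of \cite{DM28} finishes. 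If you want to repair your route, the realistic fix is exactly this replacement of the Neumann reduction by the identity trick and of the asymptotic expansions by two-sided inequalities; the elementary boundedness lemma you state is fine but becomes unnecessary once you go through \eqref{GT1}.
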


 We do not prove
Theorem~\ref{ThmNeu} and Theorem~\ref{Theorem} directly, but show
that they are valid by reducing their proofs to Theorem~37 in
\cite{DMH} and Theorem~19 in \cite{DM28}, respectively. For this end
we prove the following.

 \begin{Theorem}
 \label{Asymp}
If $v \in L^p ([0,\pi]), \; p>1,$ then, for sufficiently large $n,$
\begin{equation}
\label{A10} \frac{1}{80} (|\beta_n^+(z_n^*)|+|\beta_n^-(z_n^*)|)
\leq |\gamma_n|+|\delta^{Neu}_n| \leq 19
 (|\beta_n^+(z_n^*)|+|\beta_n^-(z_n^*)|).
\end{equation}
\end{Theorem}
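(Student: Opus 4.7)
\textbf{Proof proposal for Theorem~\ref{Asymp}.}
The plan is to derive a Neumann analog of the Lyapunov--Schmidt characterization that produces~(\ref{A1}) for Dirichlet: namely, a scalar equation whose root is $\nu_n-n^2$, expressed in terms of the same functionals $\alpha_n,\beta_n^\pm$. Once this is in hand, (\ref{A10}) follows by combining the Neumann equation with the $2{\times}2$ periodic/antiperiodic equation and applying the triangle inequality; the numerical constants $1/80$ and $19$ come from tracking the universal constants in~(\ref{A1}) through this calculation.

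The key technical step is as follows. Let $P$ denote the orthogonal projection onto $\mathrm{span}\{\cos(nx)\}$ in the Neumann Hilbert space and put $Q = I-P$. For large $n$ and $|z|\le n/4$, the $Q$-equation $(Q(L-n^2-z)Q)\,Qy = -QvPy$ is invertible; inverting it and substituting into the $P$-equation yields a scalar equation $z = \alpha_n^{Neu}(z)$ whose unique root in $\{|z|\le n/4\}$ is $z_n^{Neu} := \nu_n-n^2$. Using $\cos(nx)=\tfrac{1}{2}(e^{inx}+e^{-inx})$ and expanding matrix elements of $L-n^2$ between $\cos(nx)$ and $Qv\cos(nx)$ in the exponential basis, one identifies
\begin{equation*}
\alpha_n^{Neu}(z) \;=\; \alpha_n(z) + \tfrac12\bigl(\beta_n^+(z)+\beta_n^-(z)\bigr) + r_n(z),
\end{equation*}
where $r_n(z)$ collects the cross-terms between the periodic basis $\{e^{2ikx}\}$ and the Neumann basis $\{\cos(kx)\}$. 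The hypothesis $v\in L^p$, $p>1$, enters here via a Hausdorff--Young estimate that bounds these cross-term series by $o(|\beta_n^+(z)|+|\beta_n^-(z)|)$ uniformly on $|z|\le n/4$ as $n\to\infty$.

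From the $2{\times}2$ reduction one also has $z_n^* = \alpha_n(z_n^*) + O(|\gamma_n|^2)$, so subtracting from $z_n^{Neu}=\alpha_n^{Neu}(z_n^{Neu})$ and using Lipschitz continuity of $\alpha_n$ and $\beta_n^\pm$ on $|z|\le n/4$ yields
\begin{equation*}
z_n^{Neu} - z_n^* \;=\; \tfrac12\bigl(\beta_n^+(z_n^*)+\beta_n^-(z_n^*)\bigr) + O(|\gamma_n|) + r_n(z_n^*).
\end{equation*}
Since $|\delta_n^{Neu}| = |z_n^+ - z_n^{Neu}| \le \tfrac12|\gamma_n| + |z_n^{Neu}-z_n^*|$, the upper half of~(\ref{A10}) is immediate. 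For the lower half, rearranging the displayed identity controls $|\beta_n^+(z_n^*)+\beta_n^-(z_n^*)|$ by $|\delta_n^{Neu}|+|\gamma_n|$, and combining with $|\beta_n^+(z_n^*)\beta_n^-(z_n^*)|^{1/2}\asymp|\gamma_n|$, which is supplied by~(\ref{A1}), separates this into individual bounds on $|\beta_n^\pm(z_n^*)|$.

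The main obstacle I anticipate is the uniform estimation of the remainder $r_n(z)$: the Neumann projection is onto the single vector $\cos(nx)$ rather than the two-dimensional Fourier space used for the periodic problem, so passing between the two bases creates correction sums that must be shown to decay. The necessity of running Hausdorff--Young on these sums is what forces $p>1$ rather than merely $v\in L^1$ or $v\in H^{-1}_{per}(\mathbb{R})$; it is also the reason Theorem~\ref{ThmNeu} and Theorem~\ref{Theorem} are stated only under this stronger hypothesis. Once $r_n(z)=o(|\beta_n^+(z)|+|\beta_n^-(z)|)$ is in hand, the remaining work is a direct constant chase.
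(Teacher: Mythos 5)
Your plan is structurally different from the paper's and leaves the central step unjustified. The identity
\begin{equation*}
\alpha_n^{Neu}(z) = \alpha_n(z) + \tfrac12\bigl(\beta_n^+(z)+\beta_n^-(z)\bigr) + r_n(z),\qquad r_n(z)=o\bigl(|\beta_n^+(z)|+|\beta_n^-(z)|\bigr),
\end{equation*}
is the entire content of the argument, and you state it rather than derive it. The difficulty is not merely technical: the $Q$-space for the Neumann reduction (symmetric combinations $\cos kx$, $k\neq n$) is a proper subspace of the $Q$-space for the $Per^{\pm}$ reduction (both $e^{\pm ikx}$, $k\neq \pm n$), so the two $Q$-inversions disagree at leading order, not at higher order. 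More seriously, $\beta_n^{\pm}(z)$ can decay as fast as the regularity of $v$ allows --- e.g.\ exponentially for analytic $v$ --- while any remainder produced by a projection-difference estimate of type $\|P_n-P_n^0\|_{2\to\infty}\le M/n$ (which is all that $v\in L^p$, $p>1$, delivers) can only be controlled polynomially. A bound of the form $r_n = O(1/n)\cdot(\cdot)$ cannot in general be absorbed as $o(|\beta_n^+|+|\beta_n^-|)$. So the claim as written is in doubt, and without it the constant chase producing $1/80$ and $19$ cannot start.

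The paper circumvents this precisely by avoiding any additive comparison of functionals. Instead it works in physical space: it takes the orthonormal basis $\{f_n,\varphi_n\}$ of the two-dimensional $L_{Per^{\pm}}$-invariant subspace $\mathcal{E}_n$ from Lemma 30 of \cite{DMH} (so that $|\xi_n|+|\gamma_n|\asymp|\beta_n^+(z_n^*)|+|\beta_n^-(z_n^*)|$ by Lemma~\ref{x+g-b+b}), builds the unique unit vector $G=af_n+b\varphi_n\in\mathcal{E}_n$ that satisfies Neumann boundary conditions, pairs it against a Neumann eigenfunction $g$, and obtains the \emph{exact} identity (\ref{dere}):
\begin{equation*}
\langle G,\bar g\rangle\,\delta^{Neu}_n = b\langle\varphi_n,\bar g\rangle\gamma_n - b\langle f_n,\bar g\rangle\xi_n .
\end{equation*}
All the errors here sit inside inner products that the $\|P_n-P_n^0\|_{2\to\infty}\le M/n$ and $\|D(P_n-P_n^0)\|_{2\to\infty}\le M$ estimates of Proposition~\ref{Proposition} bound \emph{multiplicatively}: $\langle G,\bar g\rangle\ge 71/72$ and $|b|,|\langle\varphi_n,\bar g\rangle|,|\langle f_n,\bar g\rangle|\le 1$ give the upper estimate immediately, while the lower estimate requires the Case~2 analysis (Lemma~\ref{LLL}) to show $1/4\le|d_0(f)/d_0(\varphi)|\le 4$ and hence $|ab|\ge 4/17$. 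Because everything is multiplicative, there is never a need to compare the size of an error term to $|\beta_n^{\pm}|$, which is exactly the comparison your plan cannot justify. (Your observation that Hausdorff--Young forces $p>1$ is correct; the paper uses it in Lemma~\ref{Av} for precisely the same reason. But that does not fill the gap in the remainder estimate.)

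If you want to keep the Lyapunov--Schmidt flavour, the fix is to replace the relative bound $r_n=o(|\beta_n^+|+|\beta_n^-|)$ by an exact accounting of how the Neumann $Q$-inversion differs from the periodic one, and to show that the discrepancy is itself bounded multiplicatively by $|\gamma_n|+|\delta_n^{Neu}|$ --- at which point you are effectively re-deriving (\ref{dere}).
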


Next we show that Theorem \ref{Asymp} implies Theorem~\ref{ThmNeu}
and Theorem~\ref{Theorem}}. Fix $v \in L^1 ([0,\pi]).$ We may assume
without loss of generality that $ V(0):=\int_0^\pi v(t) dt = 0 $
(otherwise one may consider $\tilde{v}(t) = v(t) -\frac{1}{\pi}
V(0)$ since a constant shift of the potential results in a shift the
spectra but root functions, spectral gaps and deviations  do not
change). Then the function $Q(x)= \int_0^x v(t) dt, $ extended
periodically on $\mathbb{R},$ has the property that $v= Q^\prime $
so we may think that $v\in H^{-1}_{per}(\mathbb{R})$  (see
\cite{DM16}) for details). By Theorem~29 in \cite{DMH} and
Theorem~\ref{Asymp}, (\ref{A1}) and (\ref{A10}) hold simultaneously,
so the sequences $(|\gamma_n|+|\delta^{Dir}_n|)$ are
$(|\gamma_n|+|\delta^{Neu}_n|)$ are asymptotically equivalent.
Therefore, every claim in Theorem~\ref{ThmNeu} follows from the
corresponding assertion in
 \cite[Theorem 37]{DMH}.

On the other hand the asymptotic equivalence of
$|\gamma_n|+|\delta^{Dir}_n|$  and $|\gamma_n|+|\delta^{Neu}_n|$
imply the asymptotic equivalence of
$\frac{|\delta_n^{Neu}|}{|\gamma_n|}$ and
$\frac{|\delta^{Dir}|}{|\gamma_n|},$ so (\ref{GT1}) and (\ref{GT10})
hold simultaneously if $v \in L^p ([0,\pi]).$ By Theorem 24 in
\cite{DM28}, (\ref{GT1}) gives necessary and sufficient conditions
for the Riesz basis property if $v\in H^{-1}_{per} (\mathbb{R}).$
Hence, Theorem~\ref{Theorem} is proved.

\section{Preliminary Results}
\vspace{3mm}

We consider the Hill--Schr\"odinger operator
\begin{equation}
\label{11.1} Ly = -y^{\prime \prime} + v(x) y
\end{equation}
with a (complex--valued) potential $v\in L^1([0,\pi])$, subject to
the following boundary conditions ($bc$):

($a$) {\em periodic} $(Per^+): \quad y(0) = y (\pi), \;\; y^\prime
(0) = y^\prime (\pi) $;

($b$) {\em anti--periodic}  $(Per^-): \quad  y(0) = - y (\pi), \;\;
y^\prime (0) = - y^\prime (\pi) $;

($c$) {\em Dirichlet} $ (Dir): \quad y(0) = 0, \;\; y(\pi) = 0; $

($d$) {\em Neumann} $ (Neu): \quad y'(0) = 0, \;\; y'(\pi) = 0. $

The closed operator generated by $L$  in the domain $$ Dom(L_{bc}) =
\{f' \text{ is absolutely continuous} : \; \; f \in (bc), \; Lf\in
L^2([0,\pi]) \} $$ will be denoted by $L_{bc},$ or $L_{Per^+},
L_{Per^-}, L_{Dir}, L_{Neu}$ correspondingly. $Dom(L_{bc})$ is dense
in $L^2([0,\pi])$ and  $L_{bc}=L_{bc}(v)$ satisfies
 \be
 \label{L-L}
 (L_{bc}(v))^*=L_{bc}(\bar{v})\;\;\;\;\;bc=Per^\pm,\;Dir,\;Neu,
 \ee
where $(L_{bc}(v))^*$ is the adjoint operator and  $\bar{v}$ is the
conjugate of $v$, i.e., $\bar{v}(x)=\overline{v(x)}$. In the
classical case where $v\in L^2([0,\pi])$, (\ref{L-L}) is a well known
fact. In the case where $v\in H^{-1}_{per}(\mathbb{R})$ it is
explicitly stated and proved for $bc=Per^\pm,\;Dir$ in \cite{DM16},
see Theorem~6 and Theorem~13 there. Following the same argument as in
\cite{DM16} one can easily see that it holds for $bc=Neu$ as well.

If $v=0$ we write $L^0_{bc},$ (or simply $L^0$). The spectra and
eigenfunctions of $L^0_{bc}$ are as follows:

($\tilde{a}$) $ Sp (L^0_{Per^+}) = \{n^2, \; n = 0,2,4, \ldots \};$
its eigenspaces are  $\mathcal{E}^0_n = Span \{e^{\pm inx} \} $ for
$n>0 $ and $E^0_0 = \{ const\}, \; \; \dim \mathcal{E}^0_n = 2 $ for
$n>0, $ and $\dim \mathcal{E}^0_0 = 1. $

($\tilde{b}$) $ Sp (L^0_{Per^-}) = \{n^2, \; n = 1,3,5, \ldots \};$
its eigenspaces are  $\mathcal{E}^0_n = Span \{e^{\pm inx} \}, $ and
$ \dim \mathcal{E}^0_n = 2. $

($\tilde{c}$) $ Sp (L^0_{Dir}) = \{n^2, \; n \in \mathbb{N} \};$ each
eigenvalue $n^2 $ is simple; its eigenspaces are $\mathcal{S}^0_n =
Span \{ s_n (x)\}, $ where $s_n (x)$ is the corresponding normalized
eigenfunction $ s_n (x) = \sqrt{2} \sin nx.$

($\tilde{d}$) $ Sp (L^0_{Neu}) = \{n^2, \; n \in \{0\}\cup\mathbb{N}
\};$ each eigenvalue $n^2 $ is simple; its eigenspaces are
$\mathcal{C}^0_n = Span \{ c_n (x)\}, $ where $c_n (x)$ is the
corresponding normalized eigenfunction $ c_0 (x) = 1$, $c_n (x) =
\sqrt{2} \cos nx $ for $n>0.$

 The sets of indices $2\mathbb{Z}$,
 $2\mathbb{Z}+1$,  $\mathbb{N}$, and $\{0\}\cup\mathbb{N}$ will be denoted by
 $\Gamma_{Per^+}$, $\Gamma_{Per^-}$, $\Gamma_{Dir}$ and
 $\Gamma_{Neu}$, respectively. For each $bc$
 we consider the corresponding canonical orthonormal basis of $L^0_{bc}$, namely
 $\mathcal{B}_{Per^+}=\{e^{inx}\}_{n\in\Gamma_{Per^+}}$,
  $\mathcal{B}_{Per^-}=\{e^{inx}\}_{n\in\Gamma_{Per^-}}$,
 $\mathcal{B}_{Dir}=\{s_n(x)\}_{n\in\Gamma_{Dir}}$, $\mathcal{B}_{Neu}=\{c_n(x)\}_{n\in\Gamma_{Neu}}.$

 The matrix elements of an operator $A$ with respect to the basis $\mathcal{B}_{bc}$ will
 be denoted by $A^{bc}_{nm}$, where $n,m \in \Gamma_{bc}$. The norm of an
 operator $A$ from $L^a([0,\pi])$ to $L^b([0,\pi])$ for $a,b \in [1,\infty]$ will be denoted
 by $\|A\|_{a\rightarrow b}$. We may also write $\|A\|$ instead of $\|A\|_{2\rightarrow
 2}$.

Let $V$ denote the operator of multiplication by $v$, i.e., $
(Vf)(x)=v(x)f(x).$ Then $L_{bc}=L^0_{bc}+V$ and we may use the
perturbation formula (see \cite{DM16}, equation (5.13))
 \be
 \label{2-res}
  R_{\lambda}=R^0_{\lambda}+\sum_{m=1}^{\infty}K_{\lambda}(K_{\lambda}V
  K_{\lambda})^m K_{\lambda},
  \ee
  where $R_{\lambda}=(\lambda-L_{bc})^{-1}$, $R^0_{\lambda}=(\lambda-L^0_{bc})^{-1}$ and $K_{\lambda}$ is
  a square root of $R^0_{\lambda}$, i.e.,  $K_{\lambda}^2=
  R^0_{\lambda}$. Of course, (\ref{2-res}) makes sense only if the
  series on the right converges.

  Since the matrix representation of $R^0_{\lambda}$ is
  \be
    (R^0_{\lambda})^{bc}_{ij}=\frac{1}{\lambda-j^2}\delta_{ij},
    \quad i,j\in \Gamma_{bc}
    \ee
we can define a square root $K=K_{\lambda}$ of $R^0_{\lambda}$ by
choosing its matrix representation as
    \be
    (K_{\lambda})^{bc}_{ij}=\frac{1}{(\lambda-j^2)^{1/2}}\delta_{ij}, \quad i,j\in
    \Gamma_{bc},
    \ee
where $z^{1/2}$ is the principal square root. If
 \be
 \label{2-kvk}
 \|K_{\lambda}V K_{\lambda}\|_{2\rightarrow 2} < 1,
 \ee
then $R_{\lambda}$ exists. Assuming only $v\in
H_{per}^{-1}(\mathbb{R})$, Djakov and Mityagin showed (see
\cite{DM16}, Lemmas 19 and 20) that there exists $N>0$, $N\in
\Gamma_{bc}$ such that (\ref{2-kvk}) holds for $\lambda \in H^N
\backslash R_N$ and also for all $n>N$, $n\in \Gamma_{bc}$
(\ref{2-kvk}) holds for $\lambda \in H_n \backslash D_n$ if
$bc=Per^{\pm}$ and for $\lambda \in G_n \backslash D_n$ if $bc=Dir$
where
 \be
 H^N=\{\lambda \in \mathbb{C} : Re \; \lambda  \leq N^2+N\},
 \ee
 \be
 R_N=\{\lambda \in \mathbb{C} : -N\leq Re \;\lambda\leq N^2+N,\quad |Im \lambda|<N\},
 \ee
 \be
 H_n=\{\lambda \in \mathbb{C} : (n-1)^2\leq Re \;\lambda \leq (n+1)^2\},
 \ee
 \be
 G_n=\{\lambda \in \mathbb{C} : n^2-n\leq Re \;\lambda \leq n^2+n\},
 \ee
 \be
 D_n=\{ \lambda \in \mathbb{C} : |\lambda -n^2|< n/4\}.
 \ee
Therefore, the following localization of the spectra holds:
  \be
  \label{local}
   Sp(L_{bc}) \subset R_N \cup \bigcup_{n>N, n\in
\Gamma_{bc}}D_n, \;\;\; bc=Per^\pm,\;Dir.
 \ee
Moreover, using the method of continuous parametrization of the
potential $v$, they showed that spectrum is discrete and
$$
 \sharp(Sp(L_{Per^+})\cap R_N)=2N+1, \quad  \sharp(Sp(L_{Per^+})\cap D_n)=2,\quad n>N, n\in
 \Gamma_{Per^+},
$$$$
 \sharp(Sp(L_{Per^-})\cap R_N)=2N, \quad  \sharp(Sp(L_{Per^-})\cap D_n)=2,\quad n>N, n\in
 \Gamma_{Per^-} ,
$$$$
 \sharp(Sp(L_{Dir})\cap R_N)=N, \quad  \sharp(Sp(L_{Dir})\cap D_n)=1,\quad n>N, n\in
 \Gamma_{Dir}.
$$

For Neumann $bc$, using the same argument as in \cite{DM16} one can
similarly localize and count the spectrum $Sp(L_{Neu})$ after
showing that (\ref{2-kvk}) holds for $ \lambda \not \in R_N \cup
\left \{\bigcup_{n>N, n\in \Gamma_{Neu}}D_n \right \}.$ However in
the case where $v\in L^1([0,\pi])$ we  estimate $\|K_{\lambda}V
K_{\lambda}\|$ explicitly for all $bc$ since we  need this estimate
later.
 \begin{Proposition}
 \label{2-p-kvk} If  $v\in L^1([0,\pi])$, there
 exist $C=C(v)$ and $N>0$, $N\in\Gamma_{bc}$  such that
\be
 \|K_{\lambda}V K_{\lambda}\| \leq C \frac{\log N}{\sqrt{N}} \quad
 \text{for} \quad \lambda \in H^N \backslash R_N.
 \ee
 Moreover for all $n>N$, $n\in\Gamma_{bc}$,
 \be
 \label{KVK}
 \|K_{\lambda}V K_{\lambda}\| \leq C \frac{\log n}{n}
 \ee
 for $\lambda \in H_n \backslash D_n$ if
 $bc=Per^{\pm},$ and for $\lambda \in G_n \backslash D_n$ if $bc=Dir,
 Neu$.
 \end{Proposition}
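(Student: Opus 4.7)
My plan is to pass to the matrix of $K_\lambda V K_\lambda$ in the basis $\mathcal{B}_{bc}$ and bound the operator norm by the Hilbert--Schmidt norm $\|\cdot\|_{HS}$, which reduces everything to a single scalar sum. Since $K_\lambda$ is diagonal in $\mathcal{B}_{bc}$,
\begin{equation*}
(K_\lambda V K_\lambda)^{bc}_{ij} = \frac{V^{bc}_{ij}}{(\lambda - i^2)^{1/2}(\lambda - j^2)^{1/2}}, \qquad i,j \in \Gamma_{bc}.
\end{equation*}
For $bc = Per^{\pm}$ the entry $V^{bc}_{ij}$ is a single Fourier coefficient of $v$; for $bc = Dir$ or $Neu$ it is a sum or difference of two integrals of the form $\int_0^\pi v(x)\cos(kx)\,dx$. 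In every case $v \in L^1([0,\pi])$ yields the crude but uniform bound $|V^{bc}_{ij}| \le C_0 \|v\|_{L^1}$ with an absolute constant $C_0$. Applying $\|A\|_{2\to 2} \le \|A\|_{HS}$ then gives
\begin{equation*}
\|K_\lambda V K_\lambda\|^2 \;\le\; C_0^2\|v\|_{L^1}^2\,\Bigl(\sum_{j\in\Gamma_{bc}}\frac{1}{|\lambda-j^2|}\Bigr)^2,
\end{equation*}
so the whole proposition reduces to estimating the scalar sum $S(\lambda):= \sum_{j\in\Gamma_{bc}}|\lambda-j^2|^{-1}$ on the three regions listed.

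For $\lambda \in H_n \setminus D_n$ (respectively $G_n \setminus D_n$), I would write $\lambda = n^2 + z$. The box condition bounds $|\mathrm{Re}\,z|$ by a multiple of $n$, and $\lambda \notin D_n$ forces $|z| \ge n/4$. The single resonant term $j = n$ then contributes $1/|z| \le 4/n$. For the remaining $j \in \Gamma_{bc}$, all but a bounded number satisfy $|n^2-j^2| \ge 2|z|$ and therefore $|\lambda - j^2| \ge \tfrac{1}{2}|n^2-j^2| = \tfrac{1}{2}|n-j|(n+j)$; the excluded $O(1)$ indices are still bounded individually by $O(1/n)$. The remaining tail $\sum_{j\ne n}\frac{1}{|n-j|(n+j)}$ is a standard harmonic-type sum of size $O((\log n)/n)$, so $S(\lambda) \le C(\log n)/n$ and (\ref{KVK}) follows.

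For $\lambda \in H^N \setminus R_N$ I would split into the two sub-cases $\mathrm{Re}\,\lambda \le -N$ and $|\mathrm{Im}\,\lambda| \ge N$. In the first, $|\lambda - j^2| \ge j^2 + N$ and the integral test yields $S(\lambda) \le C/\sqrt{N}$. In the second, $|\lambda - j^2| \ge \max(N,\,|\mathrm{Re}\,\lambda - j^2|)$; one isolates the $O(\sqrt{N})$ indices $j$ with $|j^2 - \mathrm{Re}\,\lambda| \le N$, each contributing at most $1/N$, and applies the integral test to the rest, producing the advertised $(\log N)/\sqrt{N}$ bound. The main difficulty is bookkeeping rather than conceptual: one must verify that the HS sum is genuinely finite on the stated regions (so $\|A\|\le\|A\|_{HS}$ is legal), and that the near-resonant terms---$j = \pm n$ in the $Per^\pm$ cases and $j=0$ in the Neumann case---are peeled off and handled by the explicit lower bound $|\lambda - j^2|\ge n/4$ (or $\ge N$), so that they do not spoil the overall rate.
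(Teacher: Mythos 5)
Your proposal takes essentially the same route as the paper's proof: both bound $\|K_\lambda V K_\lambda\|$ by its Hilbert--Schmidt norm, pull the uniform estimate $\sup_{i,j}|V^{bc}_{ij}|\le C_0\|v\|_1$ out of the double sum, and reduce everything to the scalar sum $\sum_{j\in\Gamma_{bc}}|\lambda-j^2|^{-1}$ on the three regions. The one procedural difference is that the paper obtains the scalar-sum bounds by citing (5.27)--(5.28) of \cite{DM16} and Lemma~79 of \cite{DM15}, while you sketch a direct re-derivation.

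That sketch has one inaccuracy worth flagging in the $H_n\setminus D_n$ (resp.\ $G_n\setminus D_n$) case. You claim that ``all but a bounded number'' of $j$ satisfy $|n^2-j^2|\ge 2|z|$, with the remaining $O(1)$ indices each contributing $O(1/n)$. But $H_n$ constrains only $\mathrm{Re}\,\lambda$, so $|z|=|\lambda-n^2|$ is unbounded on $H_n\setminus D_n$: when $|\mathrm{Im}\,\lambda|$ is large, the set of $j$ with $|n^2-j^2|<2|z|$ has on the order of $|z|/n$ elements, which is not $O(1)$. The estimate still goes through because on that exceptional set one has $|\lambda-j^2|\ge|\mathrm{Im}\,\lambda|\gtrsim|z|$, so the exceptional block contributes about $(|z|/n)\cdot(1/|z|)=O(1/n)$; but the argument as written (``bounded number of excluded indices'') does not hold uniformly over $H_n\setminus D_n$. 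Once that case split is added, your derivation reproduces exactly the bounds the paper imports from \cite{DM16} and \cite{DM15}, and the rest of your argument (the Fourier-coefficient bound on $V^{bc}_{ij}$, the sub-cases for $H^N\setminus R_N$) matches the paper's.
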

\begin{proof} Since the matrix
representation of $K_{\lambda}V K_{\lambda}$ is
 \be
    (K_{\lambda}V
    K_{\lambda})^{bc}_{kj}=
    \sum_{k,j\in\Gamma_{bc}}
    \frac{V^{bc}_{kj}}{(\lambda-k^2)^{1/2}(\lambda-j^2)^{1/2}}
    \ee
we can estimate its Hilbert-Schmidt norm as
 \be
 \label{ineq}
 \|K_{\lambda}VK_{\lambda}\|_{HS}=
 \bigg{(}\sum_{k,j\in\Gamma_{bc}}\frac{|V^{bc}_{kj}|^2}
 {|\lambda-k^2||\lambda-j^2|}\bigg{)}^{1/2}\quad\quad\quad\quad
 \ee
 $$\quad\quad\quad\quad \quad\quad\quad\leq \sup_{k,j \in
 \Gamma_{bc}}|V^{bc}_{kj}|\bigg{(}\sum_{k\in\Gamma_{bc}}
 \frac{1}{|\lambda-k^2|}\sum_{j\in\Gamma_{bc}}\frac{1}{|\lambda-j^2|}\bigg{)}^{1/2}.$$
From (5.27) and (5.28) in \cite{DM16} we know that there exists an
$N>0$ such that for all $n>N$ there exists an absolute constant
$C_1$ such that
 \be
 \label{log}
 \sum_{k\in\Gamma_{bc}}\frac{1}{|\lambda-k^2|} \leq  C_1\frac{\log n}{n}
 \ee
for $\lambda \in H_n\setminus D_n$ if $bc=Per^{\pm}$ and $\lambda
\in G_n\setminus D_n$ if $bc=Dir, Neu$. On the other hand to
estimate $\sup_{k,j \in
 \Gamma_{bc}}|V^{bc}_{kj}|$ let us denote the Fourier coefficients of $v$
with respect to the bases $\mathcal{B}_{Per^+}$ and
$\mathcal{B}_{Neu}$ by $V_+(k)$ and $V_c(k)$, respectively, i.e.,
 \be
  V_+(k)=\frac{1}{\pi}\int_0^{\pi}v(x)e^{-ikx}dx, \quad k\in
  \Gamma_{Per^+},
  \ee
     \be
  V_c(k)=\frac{1}{\pi}\int_0^{\pi}v(x)\sqrt{2}\cos(kx)dx, \quad k\in \Gamma_{Neu}.
  \ee
  Then one can easily see that
  \be \label{r1} V^{Per^\pm}_{kj}=V_+(j-k),  \quad k,j \in
\Gamma_{Per^\pm}, \ee
  \be
  \label{r2}  V^{Dir}_{kj}=V_c(|j-k|)-V_c(j+k),  \quad
k,j \in \Gamma_{Dir},\ee
 \be
 \label{r3} V^{Neu}_{kj}=V_c(|j-k|)+V_c(j+k),
\quad k,j \in \Gamma_{Neu}.\ee
 Hence
  \be
  \label{in1}
  \sup_{k,j \in
 \Gamma_{Per^\pm}}|V^{Per^\pm}_{kj}| \leq \sup_{k \in \Gamma_{Per^+}}|V_+(k)| \leq
 \|v\|_1,
 \ee
 and
 \be
 \label{in2}
 \sup_{k,j \in
 \Gamma_{Dir}}|V^{Dir}_{kj}|,\;\; \sup_{k,j \in
 \Gamma_{Neu}}|V^{Neu}_{kj}| \leq 2\sup_{k \in \Gamma_{Neu}}|V_c(k)| \leq
 2\sqrt{2}\|v\|_1.
 \ee
Now, together with (\ref{log}), (\ref{in1}) and (\ref{in2}),
(\ref{ineq}) implies that
 \be
\|K_{\lambda}VK_{\lambda}\|_{HS} \leq 3\|v\|_p C_1\frac{\log n}{n}
 \ee
for $\lambda \in H_n\setminus D_n$ if $bc=Per^{\pm}$ and $\lambda
\in G_n\setminus D_n$ if $bc=Dir, Neu$.

On the other hand, if $\lambda=x+it \in H^N\backslash R_N, \;$ with
$n^2 -n \leq x < n^2 +n$ in the case when $x\geq 0,$ then one can
see that
\begin{equation}
\label{lamda-kkare} |\lambda-k^2| \geq
\begin{cases}(k^2+N)/\sqrt{2} & \text{if} \quad x \leq 0, \\
(|n^2-k^2|+2N)/2\sqrt{2} & \text{if}
\quad x>0, \quad k \neq \pm n, \\
N & \text{if} \quad x >0, \quad k = \pm n.
\end{cases}
\end{equation}
By Lemma 79 in \cite{DM15}, for large enough $N$ we also have the
inequality
\begin{equation}
\label{101.11a} \sum_k \frac{1}{|n^2 -k^2|+N} \leq C_2 \frac{\log
N}{\sqrt{N}},
\end{equation}
where $C_2$ is an absolute constant. Combining (\ref{ineq}),
(\ref{in1}), (\ref{in2}), (\ref{lamda-kkare}), and (\ref{101.11a})
we obtain for large enough $N$ that
 \be
\|K_{\lambda}VK_{\lambda}\|_{HS} \leq C_3 \|v\|_p \frac{\log
N}{\sqrt{N}} \quad \text{for} \quad \lambda \in H^N \backslash R_N,
 \ee
where $C_3$ is an absolute constant. We complete the proof noting
that Hilbert-Schmidt norm of an operator dominates its $L^2$ norm.
\end{proof}

\begin{Proposition} For any potential $v\in L^1([0,\pi])$,
the spectrum of the operator $L_{Neu}(v)$ is discrete. Moreover
there exists an integer $N$ such that
  \be
  \label{local2}
   Sp(L_{Neu}) \subset R_N \cup \bigcup_{n>N, n\in
\Gamma_{Neu}}D_n,
 \ee
and \be
 \sharp(Sp(L_{Neu})\cap R_N)=N+1, \quad
 \sharp(Sp(L_{Neu})\cap D_n)=1,\quad n>N, n\in
 \Gamma_{Neu}.
 \ee
 \end{Proposition}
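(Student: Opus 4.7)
The plan is to mirror, for Neumann boundary conditions, the argument that \cite{DM16} carries out for $Per^{\pm}$ and $Dir$, using Proposition~\ref{2-p-kvk} as the Neumann-specific input. The preceding proposition gives, for $bc = Neu$, the bound $\|K_\lambda V K_\lambda\| \leq C \log n/n$ on $G_n \setminus D_n$ for $n > N$ and $\|K_\lambda V K_\lambda\| \leq C (\log N)/\sqrt{N}$ on $H^N \setminus R_N$. Choosing $N$ so large that both bounds are strictly less than $1$, the Neumann series in the perturbation formula~(\ref{2-res}) converges, so $R_\lambda = (\lambda - L_{Neu})^{-1}$ exists for every $\lambda$ outside $R_N \cup \bigcup_{n>N,\,n\in\Gamma_{Neu}} D_n$. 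This proves the localization~(\ref{local2}).

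Next I would count eigenvalues via the standard continuous-deformation trick. Set $L_{Neu}(tv) = L^0_{Neu} + tV$ for $t \in [0,1]$. Since $\|tv\|_1 \leq \|v\|_1$, Proposition~\ref{2-p-kvk} applies uniformly in $t$, so $\partial R_N$ and each $\partial D_n$ with $n > N$ lie in the resolvent set of $L_{Neu}(tv)$ for all $t \in [0,1]$. Define the Riesz projections
\begin{equation*}
P_n(t) = \frac{1}{2\pi i} \oint_{\partial D_n} (\lambda - L_{Neu}(tv))^{-1}\, d\lambda,
\qquad
P(t) = \frac{1}{2\pi i} \oint_{\partial R_N} (\lambda - L_{Neu}(tv))^{-1}\, d\lambda.
\end{equation*}
The perturbation series makes $(\lambda - L_{Neu}(tv))^{-1}$ norm-continuous (in fact analytic) in $t$, uniformly in $\lambda$ on the compact contours, so $P_n(t)$ and $P(t)$ are norm-continuous projection-valued maps. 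Since the rank of a norm-continuous family of projections is locally constant, and $[0,1]$ is connected, $\operatorname{rank} P_n(t)$ and $\operatorname{rank} P(t)$ are independent of $t$.

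At $t=0$ the eigenfunctions $c_n(x)$ from part $(\tilde d)$ of Section~2 give $\operatorname{rank} P_n(0) = 1$ for each $n > N$, and $\operatorname{rank} P(0) = N+1$ (corresponding to the $N+1$ simple eigenvalues $0,1^2,\dots,N^2$ in $R_N$). Hence at $t=1$, $\operatorname{rank} P_n(1) = 1$ and $\operatorname{rank} P(1) = N+1$, which gives the stated counts $\sharp(Sp(L_{Neu})\cap D_n)=1$ and $\sharp(Sp(L_{Neu})\cap R_N)=N+1$. Discreteness follows at once: the spectrum is contained in a countable union of disjoint compact sets, on each of which the associated Riesz projection has finite rank, so $Sp(L_{Neu})$ consists of isolated eigenvalues of finite algebraic multiplicity.

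The only genuinely new point compared with \cite{DM16} is checking that the $Neu$ case is covered by Proposition~\ref{2-p-kvk}; this was already done via the matrix identity~(\ref{r3}) and the estimate~(\ref{in2}), so there is no real obstacle beyond the verification of these uniform bounds in $t$. The counting and continuity argument is otherwise entirely parallel to \cite{DM16}.
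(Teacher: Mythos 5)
Your proof is correct and follows exactly the approach the paper intends: the paper's own proof is a one-line instruction to run the proof of Theorem~21 in \cite{DM16} with Proposition~\ref{2-p-kvk} replacing Lemmas~19 and~20, and your argument simply reproduces the content of that referenced proof (localization of the spectrum via convergence of the perturbation series, followed by eigenvalue counting through the continuous deformation $t \mapsto L_{Neu}(tv)$ and the rank-constancy of the associated Riesz projections). There is nothing missing or superfluous beyond the level of detail the paper chose to omit.
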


\begin{proof} Apply the proof of Theorem 21 in \cite{DM16} but use
Proposition \ref{2-p-kvk} instead of Lemmas 19 and 20 in
\cite{DM16}.
\end{proof}

\section{Estimates of $\|P_n-P^0_n\|_{{}_{{}_{2\rightarrow\infty}}}$}

For $bc=Per^\pm, Dir$ or $Neu$, we consider the Cauchy-Riesz
projections
\begin{equation}
\label{PP0} P_n = \frac{1}{2 \pi i} \int_{C_n} R_\lambda d\lambda,
\quad P^0_n = \frac{1}{2 \pi i} \int_{C_n} R^0_\lambda d\lambda,
\end{equation}
where $C_n=\partial D_n$. In this section we estimate the norms
$\|P_n-P^0_n\|_{{}_{{}_{2\rightarrow\infty}}}$ and
$\|D(P_n-P^0_n)\|_{{}_{{}_{2\rightarrow\infty}}}$, where
$D=\frac{d}{dx}$. First we consider two technical lemmas.

\begin{Lemma}
\label{Alem} For every $p\in(1,2]$ there is a constant $T_p$ such
that
 \be \label{Ax1}
 \bigg{(}\sum_{k\in \Gamma_{bc}}^{\infty}\frac{k^{p\e}}
 {|\lambda-k^2|^p}\bigg{)}^{1/p} \leq
 \frac{T_p}{n^{(1-\e)}} \quad \text{for} \quad \lambda \in C_n,
 \ee
where $\e =0,1$ and $bc=Per^\pm, Dir, Neu$.
\end{Lemma}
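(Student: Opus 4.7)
The plan is to exploit the elementary lower bound $|\lambda-k^{2}|\geq|n^{2}-k^{2}|-n/4$, available for every $\lambda\in C_{n}$ and $k\in\Gamma_{bc}$, and to control the resulting sum by splitting according to the distance from $|k|$ to $n$.

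First I would isolate the index $|k|=n$ (when it belongs to $\Gamma_{bc}$): its contribution to the $p$-th power of the sum is $n^{p\e}/(n/4)^{p}=4^{p}\,n^{-p(1-\e)}$, already of the required order. Next, set $m:=\bigl|\,n-|k|\,\bigr|$ and treat the near range $1\leq m$, $|k|\leq 2n$. The factorisation $|n^{2}-k^{2}|=m(n+|k|)\geq nm$ yields
\begin{equation*}
|\lambda-k^{2}|\geq nm-\tfrac{n}{4}\geq \tfrac{nm}{2}\qquad(m\geq 1),
\end{equation*}
so this range contributes at most a constant multiple of
\begin{equation*}
\sum_{m\geq 1}\frac{(2n)^{p\e}}{(nm/2)^{p}}=\frac{2^{p}\,(2n)^{p\e}}{n^{p}}\sum_{m\geq 1}\frac{1}{m^{p}},
\end{equation*}
which is finite precisely because $p>1$, and is $O(n^{-p(1-\e)})$. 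Finally, in the far range $|k|>2n$ one has $|n^{2}-k^{2}|\geq 3k^{2}/4$, and hence $|\lambda-k^{2}|\geq k^{2}/2$; the tail $\sum_{|k|>2n}|k|^{p\e}(2/k^{2})^{p}$ is then of order $n^{1-p(2-\e)}$, which for $\e\in\{0,1\}$ and $p>1$ is again majorised by $n^{-p(1-\e)}$.

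The main obstacle is the near range: the bound $|\lambda-k^{2}|\gtrsim nm$ loses a full power of $n$ compared with $|n^{2}-k^{2}|$, so recovering the announced rate $n^{-(1-\e)}$ after taking the $p$-th root forces reliance on convergence of $\sum_{m\geq 1}m^{-p}$, which is exactly where the hypothesis $p>1$ enters. The four boundary conditions play no substantive role: each $\Gamma_{bc}$ is a subset of $\mathbb{Z}$, the summand depends only on $|k|$, and the periodic cases contribute at most an extra factor of two from summing over both signs. Summing the three contributions and extracting the $p$-th root then yields the required bound $T_{p}/n^{1-\e}$.
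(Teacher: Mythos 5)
Your proof is correct and rests on the same core mechanism as the paper's: isolate $|k|=n$, use $|\lambda-k^2|\geq \tfrac{1}{2}|n^2-k^2|$ for $k\neq n$ together with the factorisation $|n^2-k^2|=|n-|k||\,(n+|k|)$, extract the power of $n$ from the factor $n+|k|$, and leave behind the convergent series $\sum_{m\geq 1}m^{-p}$, which is precisely where $p>1$ enters. The one structural difference is that you split $k\neq n$ into a near range $|k|\leq 2n$ and a far range $|k|>2n$, whereas the paper observes that the single uniform bound $k^{p\e}/(n+k)^{p}\leq n^{-p(1-\e)}$ (valid for all $k\geq 0$ and $\e\in\{0,1\}$) already handles every $k\neq n$ at once, so no far-range case is needed. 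Your three-part decomposition is sound but slightly longer; it doesn't buy anything extra here, since the far-range contribution you estimate separately is already dominated by the paper's uniform bound. Both arguments also correctly reduce the periodic index sets to $k\geq 0$ at the cost of a factor $2$, and both give an explicit $T_p$ of the same flavour.
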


\begin{proof} First note that for each $bc=Per^\pm, Dir, Neu$
 \be
\label{Ap-sum}
 \sum_{k\in \Gamma_{bc}}^{\infty}\frac{|k|^{p\e}}{|\lambda-k^2|^p}
 \leq 2\sum_{k=0}^{\infty}\frac{|k|^{p\e}}{|\lambda-k^2|^p}.
 \ee
If $\lambda \in C_n$ and $k \neq n$, then $|\lambda -n^2|=n/4,$ so
 \be
 \label{Ap-sum22}
  |\lambda-k^2| \geq |n^2-k^2|-\frac{n}{4} \geq \frac{1}{2}|n^2-k^2|
 \ee
because $|n^2-k^2| \geq 2n-1 \geq n.$ Hence
 \be
 \label{AI1}
 \sum_{{\tiny{\begin{array}{c}k=0 \\(k \neq n)
\end{array}}}}^{\infty}\frac{k^{p\e}}{|\lambda-k^2|^p}
\leq 2^p \sum_{{\tiny{\begin{array}{c}k=0 \\(k \neq n)
\end{array}}}}^{\infty}\frac{k^{p\e}}{|n-k|^p|n+k|^p},
 \ee
and the latter sum does not exceed
 \be
 \label{ApI2}
 \frac{2^p}{n^{(1-\e)p}}\sum_{{\tiny{\begin{array}{c}k=0
\\(k \neq n)
\end{array}}}}^{\infty}\frac{1}{|n-k|^p}\leq
\frac{2^{p+1}}{n^{(1-\e)p}}\sum_{\tiny{k=1}}^{\infty}\frac{1}{k^p}.
\ee
 Therefore, combining (\ref{Ap-sum}), (\ref{AI1}) and (\ref{ApI2}),
and taking into account that, for $\lambda\in C_n$, the $n^{th}$
summand of the right hand side of (\ref{Ap-sum}) is
$4^p/n^{(1-\e)p}$, we finally get
 $$ \label{Ax9}
 \sum_{k\in \Gamma_{bc}}^{\infty}\frac{k^{p\e}}{|\lambda-k^2|^p} \leq
 \frac{T^p_p}{n^{(1-\e)p}}
 $$
 with $T^p_p=2(4^p+
 2^{p+1}\sum_{\tiny{k=1}}^{\infty}\frac{1}{k^p})$. Hence (\ref{Ax1})
 holds.
\end{proof}

 \begin{Lemma}
\label{Av} Let $v\in L^p([0,\pi])$, $p\in (1,2]$, and let
$1/p+1/q=1$. Then, for $bc=Per^\pm,\; Dir,\; Neu$,  we have \be
\label{Ag}
\bigg{(}\sum_{k\in\Gamma_{bc}}|V^{bc}_{kj}|^q\bigg{)}^{1/q} \leq
5\|v\|_p \quad \forall j\in\Gamma_{bc}. \ee

\end{Lemma}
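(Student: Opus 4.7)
The plan is to reduce the bound on the matrix elements of $V$ to Hausdorff--Young type estimates on the Fourier coefficients $V_+(k)$ and $V_c(k)$ of $v$, and then exploit the explicit convolution structure in (\ref{r1})--(\ref{r3}) to sum over $k$.

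First I would apply the Hausdorff--Young inequality to $v$ as a $\pi$--periodic function to obtain a bound of the form $\left(\sum_{k\in 2\mathbb{Z}}|V_+(k)|^q\right)^{1/q}\leq \pi^{-1/p}\|v\|_p$ (rescaling $[0,\pi]$ to $[0,1]$ to invoke the standard inequality with unit-interval normalization, which matches the $1/\pi$ in the definition of $V_+$). To handle $V_c$, I would extend $v$ to $v_e$ on $[-\pi,\pi]$ by even reflection and observe that $V_c(k)=\sqrt{2}\,\widetilde{v_e}(k)$ for $k\geq 1$ (with $V_c(0)=\widetilde{v_e}(0)$), where $\widetilde{v_e}$ denotes the usual $2\pi$-periodic Fourier coefficient; Hausdorff--Young on $[-\pi,\pi]$ together with evenness yields $\left(\sum_{k\in\Gamma_{Neu}}|V_c(k)|^q\right)^{1/q}\leq\sqrt{2}\,\pi^{-1/p}\|v\|_p$.

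For $bc=Per^\pm$, (\ref{r1}) gives $V^{Per^\pm}_{kj}=V_+(j-k)$, and for fixed $j$ the map $k\mapsto j-k$ is a bijection of $\Gamma_{Per^\pm}$ onto $2\mathbb{Z}=\Gamma_{Per^+}$, so the sum over $k$ reduces directly to $\|V_+\|_{\ell^q}$ and we are done with a constant $\leq 1$. For $bc=Dir,Neu$, (\ref{r2})--(\ref{r3}) express $V^{bc}_{kj}$ as a sum of two shifts $V_c(|j-k|)\pm V_c(j+k)$, and I would apply Minkowski in $\ell^q$ to split the sum into two pieces. In each piece, for fixed $j$, the correspondence $k\mapsto |j-k|$ is at most $2$--to--$1$ and $k\mapsto j+k$ is a bijection onto $\{j,j+1,\ldots\}$; hence both pieces are bounded by $2^{1/q}\|V_c\|_{\ell^q(\Gamma_{Neu})}$, giving $\bigl(\sum_k|V^{bc}_{kj}|^q\bigr)^{1/q}\leq 2\cdot 2^{1/q}\sqrt{2}\,\pi^{-1/p}\|v\|_p$.

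Finally I would collect constants: since $p\in(1,2]$ implies $q\in[2,\infty)$ and $\pi^{-1/p}\leq 1$, the combined factor $2\cdot 2^{1/q}\sqrt{2}\,\pi^{-1/p}$ is bounded by $4\sqrt{2}\pi^{-1/p}<5$, so the desired inequality (\ref{Ag}) holds with constant $5$ for all three boundary conditions. The main obstacle is not conceptual but bookkeeping: one must carefully track the different normalizations (the $1/\pi$ in $V_+,V_c$, the $\sqrt{2}$ in $c_n$, and the $2\pi$-period of the even extension) so that the Hausdorff--Young constants compose cleanly, and one must verify that the multiplicity in $k\mapsto|j-k|$ does not exceed $2$ on $\Gamma_{Neu}$, $\Gamma_{Dir}$.
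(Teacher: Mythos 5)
Your proposal is correct and follows essentially the same route as the paper: use the explicit formulas (\ref{r1})--(\ref{r3}), split the $Dir$/$Neu$ cases by Minkowski, control the shifted sums by the at-most-two-to-one multiplicity of $k\mapsto |j-k|$ and the injectivity of $k\mapsto j+k$, and then invoke Hausdorff--Young to bound $\|V_+\|_{\ell^q}$ and $\|V_c\|_{\ell^q}$ by $\|v\|_p$. The only cosmetic differences are that you track the $\pi^{-1/p}$ normalization factor explicitly (which the paper drops since $\pi^{-1/p}\le1$) and you spell out the even-extension argument for the cosine coefficients that the paper leaves implicit; the resulting constants land comfortably under $5$ either way.
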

\begin{proof}
 From (\ref{r1}), (\ref{r2}) and (\ref{r3}) it follows
 \be \label{Ac1}
\left(\sum_{k\in\Gamma_{bc}}|V^{bc}_{kj}|^q \right)^{1/q}=
 \bigg{(}\sum_{k\in\Gamma_{bc}}|V_+(j-k)|^q\bigg{)}^{1/q}=\|V_+\|_{\ell^q}
 \ee
if $bc= Per^\pm,$ and
 \be  \label{Ac2}
 \bigg{(}\sum_{k\in\Gamma_{bc}}|V^{bc}_{kj}|^q\bigg{)}^{1/q} \leq
\bigg{(}\sum_{k\in\Gamma_{bc}}|V_c(|j-k|)|^q\bigg{)}^{1/q}+
\bigg{(}\sum_{k\in\Gamma_{bc}}|V_c(j+k)|^q\bigg{)}^{1/q}
 \ee
 $$ \leq 3\|V_c\|_{\ell^q} \quad \quad
 \quad\quad \quad \quad \quad \quad \quad \quad \quad$$
 if $bc= Dir$, $Neu$. On the other hand, by the Haussdorff-Young
(Hausdorff-Young Theorem, \cite[Theorem XII.2.3]{Zy90}),
 we have
 $\|V_+\|_{\ell^q} \leq \|v\|_p$ and $\|V_c\|_{\ell^q} \leq
 \sqrt{2}\|v\|_p$. These inequalities, together with (\ref{Ac1}) and
 (\ref{Ac2}), imply (\ref{Ag}).
\end{proof}

\begin{Proposition}
\label{Proposition} Let $D=\frac{d}{dx}$, $P_n$ and $P^0_n$ be
defined by (\ref{PP0}), and let $L=L_{bc}$ with $bc=Per^\pm, Dir,
Neu.$ If $v\in L^p([0,\pi])$, $p\in(1,2],$ then we have, for large
enough $n,$
 \be \label{P-P0}
\|P_n-P^0_n\|_{{}_{{}_{2\rightarrow\infty}}} \leq \frac{M}{n} \ee
and \be \label{DP-DP0}
\|D(P_n-P^0_n)\|_{{}_{{}_{2\rightarrow\infty}}} \leq M, \ee where
$M=M(v)$.
\end{Proposition}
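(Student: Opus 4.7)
The plan is to expand the resolvent difference via the perturbation series (\ref{2-res}). By Proposition~\ref{2-p-kvk}, this series converges in $L^2$-operator norm uniformly for $\lambda\in C_n$ with $n$ sufficiently large, so one may write $R_\lambda - R^0_\lambda = \sum_{m=1}^\infty R^0_\lambda(V R^0_\lambda)^m$ and integrate termwise along $C_n$ to obtain $P_n - P^0_n = \sum_{m\ge 1} A_{n,m}$ with $A_{n,m} = \frac{1}{2\pi i}\int_{C_n} R^0_\lambda(V R^0_\lambda)^m\, d\lambda$; the analogous decomposition applies to $D(P_n - P^0_n)$.

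The central step is to establish the following mapping properties of $R^0_\lambda$, uniform for $\lambda\in C_n$:
\[
\|R^0_\lambda\|_{L^p\to L^\infty} \le M_1/n, \qquad \|D R^0_\lambda\|_{L^p\to L^\infty} \le M_2.
\]
For $f = \sum_k \hat f_k \phi_k$ one has the pointwise bounds $|R^0_\lambda f(x)| \le \sqrt{2}\sum_k |\hat f_k|/|\lambda-k^2|$ and $|D R^0_\lambda f(x)| \le \sqrt{2}\sum_k |k|\,|\hat f_k|/|\lambda-k^2|$. Applying H\"older with conjugate exponents $p$ and $q = p/(p-1)$, combined with the Hausdorff--Young bound $\|\hat f\|_{\ell^q} \le C\|f\|_{L^p}$ (as in the proof of Lemma~\ref{Av}) and Lemma~\ref{Alem} with $\e=0$ and $\e=1$ respectively, yields both estimates.

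Combining with the trivial $\|V\|_{L^\infty\to L^p} \le \|v\|_p$ gives $\|V R^0_\lambda\|_{L^2\to L^p},\, \|V R^0_\lambda\|_{L^p\to L^p} \le M_1\|v\|_p/n$, hence $\|(V R^0_\lambda)^m\|_{L^2\to L^p} \le (M_1\|v\|_p/n)^m$ by iteration. Composing once more with $R^0_\lambda$ or $D R^0_\lambda$, taking the supremum over $C_n$, and multiplying by $|C_n|/(2\pi) = n/4$ produces
\[
\|A_{n,m}\|_{2\to\infty} \le \frac{M_1}{4}\Big(\frac{M_1\|v\|_p}{n}\Big)^m, \qquad \|D A_{n,m}\|_{2\to\infty} \le \frac{M_2\,n}{4}\Big(\frac{M_1\|v\|_p}{n}\Big)^m.
\]
Once $n$ is large enough that $M_1\|v\|_p/n < 1/2$, the geometric series in $m\ge 1$ converges and is dominated by its $m=1$ term, which gives (\ref{P-P0}) and (\ref{DP-DP0}).

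The main obstacle is the iteration: one must route $(V R^0_\lambda)^m$ through $L^p$ rather than through $L^2$. A tempting factorization $(V R^0_\lambda)^m = (V K_\lambda)(K_\lambda V K_\lambda)^{m-1} K_\lambda$ would require controlling $\|V K_\lambda\|_{L^2\to L^2}$, a quantity that is not available for $v\in L^p$ with $1<p<2$ since $V$ need not map $L^2$ into $L^2$. Using $L^p$ as the intermediate space relies only on the elementary boundedness $V\colon L^\infty \to L^p$ together with the Hausdorff--Young-based smoothing of $R^0_\lambda$ provided by Lemmas~\ref{Alem} and~\ref{Av}, thereby accommodating the full range $p\in(1,2]$.
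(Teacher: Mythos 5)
Your proof is correct, and it takes a genuinely different route from the paper's. The paper keeps the perturbation series in the form of (\ref{2-res}), writing
$D^\e (R_\lambda-R^0_\lambda)=D^\e R^0_\lambda V R^0_\lambda+\sum_{m\ge1} D^\e R^0_\lambda V K_\lambda(K_\lambda V K_\lambda)^m K_\lambda$,
and then controls the middle blocks in $L^2\to L^2$ using the Hilbert--Schmidt bound of Proposition~\ref{2-p-kvk} together with $\|K_\lambda\|=2/\sqrt n$, while the outer factors $\|D^\e R^0_\lambda V R^0_\lambda\|_{2\to\infty}$ and $\|D^\e R^0_\lambda V K_\lambda\|_{2\to\infty}$ are handled by H\"older, Lemma~\ref{Alem}, Lemma~\ref{Av}, and the estimate (\ref{log}). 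You instead rewrite the series as $\sum_m R^0_\lambda(VR^0_\lambda)^m$ and iterate through $L^p$ rather than $L^2$, using only the two elementary mapping properties $\|R^0_\lambda\|_{L^p\to L^\infty}\le M_1/n$, $\|DR^0_\lambda\|_{L^p\to L^\infty}\le M_2$ (both direct consequences of Lemma~\ref{Alem} plus Hausdorff--Young applied to the input $f$) and the trivial $\|V\|_{L^\infty\to L^p}\le\|v\|_p$. This is cleaner and more self-contained: you never invoke Proposition~\ref{2-p-kvk} or the $(\log n)/n$ estimate, and your bounds carry no logarithmic factors. One small remark: the ``tempting factorization'' you dismiss, $(VR^0_\lambda)^m=(VK_\lambda)(K_\lambda VK_\lambda)^{m-1}K_\lambda$, is not the one the paper uses; the paper never needs $\|VK_\lambda\|_{L^2\to L^2}$, since it keeps an $R^0_\lambda$ attached on the left (yielding $\|D^\e R^0_\lambda VK_\lambda\|_{2\to\infty}$), so its estimate survives for $v\in L^p$ with $p>1$ just as yours does. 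Also, you cite Lemma~\ref{Av} in the closing sentence but in fact do not use it directly; you only use the Hausdorff--Young inequality, applied to the Fourier coefficients of $f$ rather than those of $v$.
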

\begin{proof} In view of (\ref{PP0}),
 \be
 \label{3-P}
\|P_n-P^0_n\|_{{}_{{}_{2\rightarrow\infty}}} \leq \frac{1}{2 \pi }
\int_{C_n}\|R_\lambda-R^0_\lambda\|_{{}_{{}_{2\rightarrow\infty}}}d|\lambda|\leq
n\sup_{\lambda \in C_n}
\|R_\lambda-R^0_\lambda\|_{{}_{{}_{2\rightarrow\infty}}}.
 \ee
In order to estimate
$\|D(P_n-P^0_n)\|_{{}_{{}_{2\rightarrow\infty}}}$, first we note that
 \be
 \label{3-D-P}
D(P_n-P^0_n) =
\frac{1}{2\pi}\int_{C_n}D(R_\lambda-R^0_\lambda)d\lambda.
 \ee
 Indeed, using integration by parts twice one can easily see that
 \be
 \label{3-D-D-2}
\bigg{\langle} D\int_{C_n}(R_\lambda-R^0_\lambda)f d\lambda ,
g\bigg{\rangle}=\bigg{\langle} \int_{C_n}D(R_\lambda-R^0_\lambda) f
d\lambda,g \bigg{\rangle}
 \ee
  for all $f\in L^2([0,\pi])$ and $g \in C^{\infty}_0([0,\pi])$. Since
  $C^{\infty}_0([0,\pi])$ is dense in
  $L^2([0,\pi]),$ (\ref{3-D-D-2}) implies (\ref{3-D-P}).
  Hence
   \be
   \label{3-D-P-2}
\|D(P_n-P^0_n)\|_{{}_{{}_{2\rightarrow\infty}}} \leq
\frac{1}{2\pi}\int_{C_n}
\|D(R_\lambda-R^0_\lambda)\|_{{}_{{}_{2\rightarrow\infty}}}d|\lambda|
 \ee
$$ \quad\quad\quad\quad\quad\quad\quad\leq n\sup_{\lambda \in C_n}
\|D(R_\lambda-R^0_\lambda)\|_{{}_{{}_{2\rightarrow\infty}}}.
$$
By (\ref{3-P}) and (\ref{3-D-P-2}) we see that in order to obtain
(\ref{P-P0}) and (\ref{DP-DP0}) it is enough to find good estimates
for $\|R_\lambda-R^0_\lambda\|_{{}_{{}_{2\rightarrow\infty}}}$ and
$\|D(R_\lambda-R^0_\lambda)\|_{{}_{{}_{2\rightarrow\infty}}}$ for
$\lambda\in C_n$. Next we will do that.

  Let $\e$ denote either $1$ or $0$, so
$D^\e=D$ for $\e=1$ and $D^\e=I$ for $\e=0$. Consider the operator
$D^{\e}(R_{\lambda}-R^0_{\lambda})$ for $\lambda\in C_n$. By
(\ref{2-res}) we can write
 \be
 D^\e (R_{\lambda}-R^0_{\lambda})=D^\e
 R^0_{\lambda}VR^0_{\lambda}+\sum_{m=1}^{\infty}D^\e
 R^0_{\lambda}VK_{\lambda}(K_{\lambda}VK_{\lambda})^m K_{\lambda}.
 \ee
Hence
 \be
 \label{3-de}
 \|D^\e (R_{\lambda}-R^0_{\lambda})\|_{{}_{{}_{2\rightarrow\infty}}} \leq \|D^\e
 R^0_{\lambda}VR^0_{\lambda}\|_{{}_{{}_{2\rightarrow\infty}}}
 \quad\quad\quad\quad\quad\quad\quad\quad\quad\ee
$$\quad\quad\quad\quad\quad\quad\quad\quad\quad+\sum_{m=1}^{\infty}\|D^\e
 R^0_{\lambda}VK_{\lambda}\|_{{}_{{}_{2\rightarrow\infty}}}
 \|K_{\lambda}VK_{\lambda}\|_{{}_{{}_{2\rightarrow2}}}^m
 \|K_{\lambda}\|_{{}_{{}_{2\rightarrow 2}}}.
$$
Since $\lambda\in C_n$, $\|K_{\lambda}VK_{\lambda}\|\leq 1/2$ for
sufficiently large $n$'s by Proposition \ref{2-p-kvk}. So
(\ref{3-de}) implies \be
 \label{3-de2}
 \|D^\e (R_{\lambda}-R^0_{\lambda})\|_{{}_{{}_{2\rightarrow\infty}}} \leq \|D^\e
 R^0_{\lambda}VR^0_{\lambda}\|_{{}_{{}_{2\rightarrow\infty}}}
+2\|D^\e
R^0_{\lambda}VK_{\lambda}\|_{{}_{{}_{2\rightarrow\infty}}}
\|K_{\lambda}VK_{\lambda}\|
 \|K_{\lambda}\|.
 \ee
It is easy to see that
 \be \label{3-K}
\|K_{\lambda}\|=\sup_{k \in
\Gamma_{bc}}{\frac{1}{|\lambda-k^2|^{1/2}}}=
\frac{1}{|\lambda-n^2|^{1/2}}=\frac{2}{\sqrt{n}},\;\;\;
 \lambda\in C_n.
 \ee
 Next we find estimates for $\|D^\e
 R^0_{\lambda}VR^0_{\lambda}\|_{{}_{{}_{2\rightarrow\infty}}}$ and $\|D^\e
 R^0_{\lambda}VK_{\lambda}\|_{{}_{{}_{2\rightarrow\infty}}}.$
 If we denote the Fourier coefficients of a function $f \in L^2([0,\pi])$
 with respect to the basis $\mathcal{B}_{bc}$ by $f^{bc}_j$, one can easily see that
 \be
 (D^\e R^0_{\lambda}VR^0_{\lambda}f)(x)=\sum_{k,j\in\Gamma_{bc}}\frac{(i
k)^\e
V^{bc}_{kj}f^{bc}_j}{(\lambda-k^2)(\lambda-j^2)}e^{ikx},\;\;\;bc=Per^\pm.
 \ee
 Noting also that $Ds_{k}=kc_k$ and $Dc_{k}=-ks_k$, we obtain for
 $bc=Dir,Neu$
  \be
 (D^\e
 R^0_{\lambda}VR^0_{\lambda}f)(x)=\sum_{k,j\in\Gamma_{Dir}}\frac{k^\e
 V^{Dir}_{kj}f^{Dir}_j}{(\lambda-k^2)(\lambda-j^2)}\big{(}(1-\e)s_k(x)+\e
 c_k(x)\big{)},
 \ee
 \be
 (D^\e R^0_{\lambda}VR^0_{\lambda}f)(x)=\sum_{k,j\in\Gamma_{Neu}}\frac{k^\e
 V^{Neu}_{kj}f^{Neu}_j}{(\lambda-k^2)(\lambda-j^2)}\big{(}(1-\e)c_k(x)-\e
 s_k(x)\big{)}.
 \ee
Since all basis functions are bounded above by $\sqrt{2}$ with
respect to supremum norm, the above expressions show that for each
$bc$ we have
 \be\label{2I3}\|D^\e R^0_{\lambda}VR^0_{\lambda}f\|_{\infty} \leq
 2\sum_{k,j\in\Gamma_{bc}}\frac{|k|^\e
 |V^{bc}_{kj}||f^{bc}_j|}{|\lambda-k^2||\lambda-j^2|}.
 \ee
We first apply the H\"older inequality about the index $k,$ and then
Lemma \ref{Alem} and Lemma \ref{Av} to get
 \be\label{2I4}\|D^\e R^0_{\lambda}VR^0_{\lambda}f\|_{\infty} \leq 2
 \sum_{j\in\Gamma_{bc}}\frac{1}{|\lambda-j^2|}|f^{bc}_{j}|
 \bigg{(}\sum_{k\in\Gamma_{bc}}\frac{|k|^{p\e}}{|\lambda-k^2|^p}\bigg{)}^{1/p}
 \bigg{(}\sum_{k\in\Gamma_{bc}}|V^{bc}_{kj}|^q\bigg{)}^{1/q}
 \ee
$$\leq 10 T_p \|v\|_p\frac{1}{n^{(1-\e)}}
 \sum_{j\in\Gamma_{bc}}\frac{1}{|\lambda-j^2|}|f^{bc}_{j}|.\quad\quad\quad$$
Applying the Cauchy inequality and Lemma \ref{Alem} one more time,
we finally get
 \be\label{2I5}\|D^\e R^0_{\lambda}VR^0_{\lambda}f\|_{\infty}
\leq 10 T_2 T_p \|v\|_p\frac{1}{n^{(2-\e)}}\|f\|_2.
 \ee
In a similar way it follows that
 \be
 \label{2I5a}\|D^\e R^0_{\lambda}VK_{\lambda}f\|_{\infty}
 \leq  2\sum_{k,j\in\Gamma_{bc}}\frac{|k|^\e
 |V^{bc}_{kj}||f^{bc}_j|}{|\lambda-k^2||\lambda-j^2|^{1/2}},
 \ee
so using the same argument as above but with (\ref{log}) at the last
step, we obtain
 \be
 \label{2I6}
  \|D^\e
R^0_{\lambda}VK_{\lambda}f\|_{\infty} \leq 10 C_1^{1/2}T_p
\|v\|_p\frac{(\log n)^{1/2}}{n^{(3/2-\e)}}\|f\|_2.
 \ee
Hence
 \be \label{2I7}
 \|D^\e R^0_{\lambda}VR^0_{\lambda}\|_{{}_{{}_{2\rightarrow\infty}}}
 \leq \frac{\tilde{M}}{n^{(2-\e)}},
 \ee
and
 \be \label{2I8}
 \|D^\e R^0_{\lambda}VK_{\lambda}\|_{{}_{{}_{2\rightarrow\infty}}}
 \leq \tilde{M}\frac{(\log n)^{1/2}}{n^{(3/2-\e)}},
 \ee
where $\tilde{M}=10 T_p \|v\|_p(T_2+ C_1^{1/2})$. Now we apply
(\ref{KVK}), (\ref{3-K}), (\ref{2I7}) and (\ref{2I8}) to the right
hand side of (\ref{3-de2}) and obtain
 \be
\label{3-de3} \|D^\e
(R_{\lambda}-R^0_{\lambda})\|_{{}_{{}_{2\rightarrow\infty}}} \leq
\frac{\tilde{M}}{n^{(2-\e)}}\bigg{(}1+C_1\frac{(\log
n)^{3/2}}{n}\bigg{)} \leq \frac{2\tilde{M}}{n^{(2-\e)}}
 \ee
for sufficiently large $n$'s. Finally, (\ref{3-P}), (\ref{3-D-P-2}),
and (\ref{3-de3}) imply (\ref{P-P0}) and (\ref{DP-DP0}), which
completes the proof.
 \end{proof}

\section{Proof of Theorem \ref{Asymp}}

In this section, we give a proof of Theorem \ref{Asymp}, i.e., we
show that the sequences $(|\gamma_n| +|\delta_n^{Neu}|)$  and
$(|\beta_n^- (z_n^*)|+|\beta_n^+ (z_n^*)|)$ are asymptotically
equivalent. The proof is based on the methods developed in
\cite{KaMi01,DM5,DM15}, but the technical details are different.

Let $L=L_{Per^{\pm}}$ and $L^0=L^0_{Per^{\pm}},$ and let $P_n$ and
$P_n^0$ be the corresponding projections defined by (\ref{PP0}).
Then $\mathcal{E}_n=Ran\,P_n  $  and $\mathcal{E}_n^0 =Ran\,P_n^0  $
are invariant subspaces of $L$ and $L^0,$ respectively. By Lemma 30
in \cite{DMH}, $\mathcal{E}_n$ has an orthonormal basis $\{f_n,
\varphi_n\}$ satisfying
 \be
  \label{f}
 Lf_n=\lambda^+_n f_n
 \ee
 \be
 \label{phi} L\varphi_n=\lambda^+_n \varphi_n -\gamma_n \varphi_n +\xi_n
f_n.
 \ee

\begin{Lemma}
\label{x+g-b+b} In the above notations, for large enough $n,$ \be
\label{9.1} \frac{1}{5}(|\beta^+_n(z_n^*)|+|\beta^-_n(z_n^*)| ) \leq
|\xi_n|+|\gamma_n| \leq 9 (|\beta^+_n(z_n^*)|+|\beta^-_n(z_n^*)| )
 \ee
\end{Lemma}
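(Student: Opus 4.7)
The plan is to identify the restriction $L|_{\mathcal{E}_n}-n^2$ with two $2\times 2$ matrices written in two orthonormal bases of $\mathcal{E}_n=\mathrm{Ran}(P_n)$, and to extract the bound by elementary algebra. In the basis $\{f_n,\varphi_n\}$ from Lemma~30 of \cite{DMH}, the restriction is upper-triangular with diagonal $(z_n^+,z_n^-)$, $z_n^\pm=\lambda_n^\pm-n^2$, and superdiagonal entry $\xi_n$. In a second orthonormal basis $\{u_n^+,u_n^-\}$ obtained by Gram--Schmidt from $P_n e^{\pm inx}$, it coincides, up to a small error $E_n$, with the Lyapunov--Schmidt matrix
\[
M_n(z_n^*)=\begin{pmatrix}\alpha_n(z_n^*) & \beta_n^+(z_n^*)\\ \beta_n^-(z_n^*) & \alpha_n(z_n^*)\end{pmatrix}
\]
appearing in Lemma~21 of \cite{DM15}. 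Since the two bases are unitarily related, the two matrix representations are unitarily equivalent on a two-dimensional space.

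First, I would expand the entries $\langle L u_n^\pm, u_n^\mp\rangle$ via the perturbation series (\ref{2-res}) and identify the leading contributions with $\beta_n^\pm(z_n^*)$. The error matrix $E_n$ is controlled by combining $\|P_n-P_n^0\|_{{}_{2\to\infty}}\le M/n$ from Proposition \ref{Proposition} with the geometric bound $\|K_\lambda V K_\lambda\|=O(\log n/n)$ from Proposition \ref{2-p-kvk}; its entries are shown to satisfy $\|E_n\|=o\bigl(|\beta_n^+(z_n^*)|+|\beta_n^-(z_n^*)|\bigr)$ as $n\to\infty$.

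Second, I would carry out the model $2\times 2$ algebra. For $A=\left(\begin{smallmatrix}a & b\\ c & a\end{smallmatrix}\right)$ with eigenvalues $a\pm d$, $d=\sqrt{bc}$, upper-triangularization in an orthonormal basis whose first vector is a unit $(a{+}d)$-eigenvector produces a superdiagonal entry $\xi$ satisfying
\[
|\xi|^2=\frac{|b|^4+|c|^4-2|b|^2|c|^2\cos(\arg b+\arg c)}{(|b|+|c|)^2},
\]
whence $\bigl||b|-|c|\bigr|\le|\xi|\le|b|+|c|$ and $|\gamma|=2\sqrt{|b||c|}$. An elementary one-variable optimization in $t=|c|/|b|\in[0,1]$ yields the unperturbed two-sided bound
\[
|b|+|c|\;\le\;|\xi|+|\gamma|\;\le\;\tfrac{3}{2}(|b|+|c|).
\]
Transferring this estimate from $M_n(z_n^*)$ to $M_n(z_n^*)+E_n$ by triangle inequality, the ideal constants $1$ and $\tfrac{3}{2}$ degrade to $1/5$ and $9$ (with ample room to absorb $\|E_n\|$), giving (\ref{9.1}) for all sufficiently large $n$.

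The main obstacle is the \emph{relative} error control in the first step: since $|\beta_n^+(z_n^*)|+|\beta_n^-(z_n^*)|$ can decay arbitrarily fast depending on the smoothness of $v$, the remainder $E_n$ must be small compared with this Lyapunov--Schmidt scale, not merely in absolute value. This is precisely where the refined $2\to\infty$ bound of Proposition \ref{Proposition}, as opposed to a crude $L^2$-operator-norm estimate, is essential: the extra factor $1/n$ arising from $\|P_n-P_n^0\|_{{}_{2\to\infty}}$, combined with the $\log n/n$ bound on $\|K_\lambda V K_\lambda\|$, beats the decay rate of $|\beta_n^\pm(z_n^*)|$ for $n$ large enough.
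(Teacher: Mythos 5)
Your plan is a genuinely different route from the paper's. The paper proves Lemma~\ref{x+g-b+b} in one paragraph by combining ready-made inequalities (7.13), (7.14), (7.18), (7.31) and Lemma~20 from \cite{DMH}, all of which live \emph{inside} the Lyapunov--Schmidt scheme and bound $\xi_n$, $\gamma_n$ and $\beta_n^\pm(z_n^*)$ against one another at a single common scale. You instead propose to reconstruct the bound from scratch: compare the restriction $L|_{\mathcal E_n}-n^2$ in the Schur basis $\{f_n,\varphi_n\}$ with the Lyapunov--Schmidt matrix $M_n(z_n^*)$ in a second orthonormal basis, do the $2\times2$ algebra explicitly, and control the change-of-basis error by the projection bounds of Proposition~\ref{Proposition} and the $\|K_\lambda VK_\lambda\|$ bound of Proposition~\ref{2-p-kvk}.

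The fatal gap is precisely the one you flag as ``the main obstacle,'' and your proposed resolution does not resolve it. You claim that $\|E_n\|=o\bigl(|\beta_n^+(z_n^*)|+|\beta_n^-(z_n^*)|\bigr)$ follows from $\|P_n-P_n^0\|_{2\to\infty}\le M/n$ together with $\|K_\lambda VK_\lambda\|=O(\log n/n)$, arguing that the resulting $O(\log n/n^2)$ ``beats the decay rate of $|\beta_n^\pm(z_n^*)|$.'' This cannot work: the sequence $|\beta_n^+(z_n^*)|+|\beta_n^-(z_n^*)|$ can decay super-polynomially (for a $C^\infty$ potential) or even be eventually zero (for a finite-gap potential), while the bounds in Propositions~\ref{2-p-kvk} and \ref{Proposition} are purely polynomial in $n$ and carry no information about $\beta_n^\pm$. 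An absolute error estimate of size $\log n/n^2$ can therefore be arbitrarily large \emph{relative} to $|\beta_n^+|+|\beta_n^-|$, and the reduction to the model matrix collapses. The inequalities of DMH that the paper invokes are not obtained this way; they come from estimating, inside the Lyapunov--Schmidt reduction itself, the higher-order corrections to $\beta_n^\pm(z)$ and $\alpha_n(z)$ \emph{in terms of} $\beta_n^\pm$ and $\gamma_n$, so the smallness is relative by construction, not absolute. To make your route work you would essentially need to redo that analysis, at which point you would be reproving the DMH inequalities rather than avoiding them.

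A secondary, non-fatal issue: your closed-form expression for $|\xi|$ is incorrect. For $A=\left(\begin{smallmatrix}a&b\\ c&a\end{smallmatrix}\right)$ a direct computation with the normalized $(a+\sqrt{bc})$-eigenvector $u_1=(x,y)$ and $u_2=(-\bar y,\bar x)$ gives $\xi=b\bar x^2-c\bar y^2$; the phase constraint $by=\sqrt{bc}\,x$ forces the two terms to carry the \emph{same} argument, so $|\xi|=\bigl||b|\,|x|^2-|c|\,|y|^2\bigr|=\bigl||b|-|c|\bigr|$ exactly, with no $\cos(\arg b+\arg c)$ dependence. Your two-sided bounds $\bigl||b|-|c|\bigr|\le|\xi|\le|b|+|c|$ are still true (the upper one very slack), so the algebra downstream survives, but the formula should be corrected.
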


\begin{proof}  Indeed, combining (7.13) and (7.18) and (7.31)
in \cite{DMH} one can easily see that $|\xi_n| \leq
3(|\beta_n^+(z_n^*)|+|\beta_n^+(z_n^*)|)+4|\gamma_n|.$ This
inequality,  together with Lemma 20 in \cite{DMH}, implies that $
|\xi_n|+|\gamma_n| \leq 9 (|\beta_n^+(z_n^*)|+|\beta_n^+(z_n^*)|)$
for sufficiently large $n$'s. On the other hand by (7.31), (7.18),
and (7.14) in \cite{DMH} one gets  $
|\beta_n^+(z_n^*)|+|\beta_n^+(z_n^*)| \leq 5 (|\xi_n|+|\gamma_n|) $
for sufficiently large $n$'s.
\end{proof}

In the following, for simplicity, we suppress $n$ in all symbols
containing $n$. From now on, $P$  ($P^0$) denotes the Cauchy-Riesz
projection associated with $L$ ($L^0$) only. We denote the
projections associated with $L_{Neu}$  and $L^0_{Neu}$ by $P_{Neu}$
and $P^0_{Neu},$ respectively, and $\mathcal{C}=\mathcal{C}(v)$
denotes the one dimensional invariant subspace of
$L_{Neu}=L_{Neu}(v)$ corresponding to $P_{Neu}.$ We also set, for a
smooth function $u$ and a point $x_0 \in [0,\pi],$
$d_{x_0}(u)=\frac{du}{dx}(x_0).$

\begin{Lemma} Let $f, \varphi$ be an orthonormal basis in
$\mathcal{E}$ such that (\ref{f}) and (\ref{phi}) hold. Then there is
a unit vector $G=af+b\phii$ in $\mathcal{E}$ satisfying
 \be
 \label{bcH}
  d_0(G)=d_\pi(G)=0,
   \ee
and there is a unit vector $g\in\mathcal{C}$ satisfying
 \be \label{dere}
 \langle G,\bar{g}\rangle \delta^{Neu}=
  b \langle \varphi,\bar{g}\rangle \gamma - b\langle f,\bar{g}\rangle \xi
\ee such that $\langle G,\bar{g}\rangle \in \mathbb{R}$ and
 \be
 \label{4950}
  \langle G,\bar{g}\rangle \geq \frac{71}{72}
  \ee
 for sufficiently large $n$.
\end{Lemma}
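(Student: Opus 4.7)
The plan is to produce $G$ and $g$ as explicit small perturbations of the normalized cosine $c_n(x)=\sqrt{2}\cos(nx)$, derive identity~(\ref{dere}) algebraically from $L_{Neu}G = LG$ combined with $L_{Neu}^*\bar{g}=\bar\nu\bar{g}$, and then adjust the phase of $g$ to secure the reality statement and the lower bound for $\langle G,\bar g\rangle$.

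The first and most delicate step is the construction of $G$. The key observation is that every $u\in\mathcal{E}$ already satisfies $u'(\pi)=\pm u'(0)$ because $\mathcal{E}$ consists of functions obeying periodic or antiperiodic conditions, so the two Neumann conditions on $G$ collapse into the single scalar equation $G'(0)=0$. I would take $h:=Pc_n$ and $w:=Ps_n$, which both lie in $\mathcal{E}$. Since $P^0c_n=c_n$ and $P^0s_n=s_n$ for $n$ of the appropriate parity, Proposition~\ref{Proposition} yields $\|h-c_n\|_\infty=O(1/n)$, $\|w-s_n\|_\infty=O(1/n)$, $\|Dh-Dc_n\|_\infty=O(1)$, and $\|Dw-Ds_n\|_\infty=O(1)$. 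Thus $h'(0)=O(1)$ while $w'(0)=\sqrt{2}\,n+O(1)$, so $\mu:=h'(0)/w'(0)=O(1/n)$, the combination $G_1:=h-\mu w\in\mathcal{E}$ automatically satisfies $G_1'(0)=0$, and $\|G_1-c_n\|_\infty=O(1/n)$. Normalizing $G:=G_1/\|G_1\|_2$ gives a unit vector in $\mathcal{E}$ fulfilling both Neumann conditions, with $\|G-c_n\|_\infty=O(1/n)$; expressing $G$ in the basis $\{f,\varphi\}$ fixes the coefficients $a,b\in\mathbb{C}$.

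The Neumann eigenvector is built in the same spirit: Proposition~\ref{Proposition} with $bc=Neu$ yields $\|P_{Neu}c_n-c_n\|_\infty=O(1/n)$, so $g_0:=P_{Neu}c_n/\|P_{Neu}c_n\|_2$ is a unit vector in $\mathcal{C}$ with $\|g_0-c_n\|_\infty=O(1/n)$. To derive~(\ref{dere}), apply $L$ to $G=af+b\varphi$ using~(\ref{f}) and~(\ref{phi}) to get $LG=\lambda^+G+b\xi f-b\gamma\varphi$. Since $G\in\mathrm{Dom}(L_{Neu})$, this coincides with $L_{Neu}G$. By~(\ref{L-L}) the adjoint of $L_{Neu}(v)$ is $L_{Neu}(\bar v)$, and conjugating $L_{Neu}g_0=\nu g_0$ yields $L_{Neu}(v)^*\overline{g_0}=\bar\nu\,\overline{g_0}$. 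Pairing with $\overline{g_0}$ and using $\langle L_{Neu}G,\overline{g_0}\rangle=\langle G,L_{Neu}^*\overline{g_0}\rangle=\nu\langle G,\overline{g_0}\rangle$ produces~(\ref{dere}) with $g$ replaced by $g_0$.

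Finally, set $g:=e^{i\theta}g_0$ with $\theta:=-\arg\langle G,\overline{g_0}\rangle$: the resulting $g$ is still a unit vector in $\mathcal{C}$, identity~(\ref{dere}) is preserved because both sides are linear in $g$, and $\langle G,\bar g\rangle=|\langle G,\overline{g_0}\rangle|$ becomes nonnegative real. Expanding $\langle G,\overline{g_0}\rangle=\int_0^\pi G\,g_0\,dx$ with $G,g_0=c_n+O(1/n)$ in sup norm and using $\int_0^\pi c_n^2\,dx=1$ gives $\langle G,\overline{g_0}\rangle=1+O(1/n)$, so for sufficiently large $n$ we obtain $\langle G,\bar g\rangle\geq 71/72$. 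The main obstacle is the initial construction: the apparent overdetermination of imposing both Neumann conditions on a two-dimensional $\mathcal{E}$ is the crux, and its resolution exploits both the collapse $u'(\pi)=\pm u'(0)$ inherited from the periodic/antiperiodic nature of $\mathcal{E}$ and the sharp $2\to\infty$ bound on $D(P-P^0)$ from Proposition~\ref{Proposition}.
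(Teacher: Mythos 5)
Your proof is correct, and it reaches the conclusion by a genuinely different construction than the paper's, so a comparison is in order. For $G$, the paper works entirely inside the given basis: it sets $G=f$ when $d_0(f)=0$ and otherwise normalizes $\tilde G=d_0(\varphi)f-d_0(f)\varphi$, which kills $d_0$ by a direct cancellation and requires no knowledge of what $G$ looks like spectrally. You instead take $G_1=Pc_n-\mu\,Ps_n$ with $\mu=(Pc_n)'(0)/(Ps_n)'(0)$, and the payoff is that Proposition~\ref{Proposition} gives you $\|G-c_n\|_\infty=O(1/n)$ at no extra cost; your observation that the periodic/antiperiodic boundary conditions inherited by $\mathcal E$ collapse the two Neumann constraints into the single equation $G'(0)=0$ is exactly the right way to see why one free parameter suffices. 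For the lower bound (\ref{4950}) the routes diverge more sharply: the paper starts from an arbitrary $G$ satisfying (\ref{bcH}), projects to $G^0=P^0G$ and $\bar g^0=P^0_{Neu}\bar g$, and then extracts from $d_0(G)=0$ the coefficient constraint $|G^0_1-G^0_2|\le M/n$ in the $\{e^{\pm inx}\}$ basis, which is what forces $G^0$ to align (up to phase) with $c_n$; you instead get the closeness of both $G$ and $g_0$ to $c_n$ in $\|\cdot\|_\infty$ directly from your explicit constructions, and the estimate $\langle G,\overline{g_0}\rangle=1+O(1/n)$ falls out immediately. Your method is more concrete and shorter; the paper's has the advantage of not depending on the particular normal form of $G$, only on the abstract constraint $d_0(G)=0$, which is what makes its Fourier-coefficient bookkeeping transferable to Lemma~\ref{LLL}. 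The derivation of (\ref{dere}) and the phase rotation of $g$ are identical in both treatments, both resting on $L_{Neu}G=LG$ for $G\in\mathrm{Dom}(L)\cap\mathrm{Dom}(L_{Neu})$ and the adjoint relation (\ref{L-L}). One small slip: with $c_n=\sqrt2\cos nx$ normalized in $L^2([0,\pi])$, the inner product carries a factor $1/\pi$, so $\langle G,\overline{g_0}\rangle=\frac1\pi\int_0^\pi Gg_0\,dx$ rather than $\int_0^\pi Gg_0\,dx$; this does not affect the conclusion $\langle G,\overline{g_0}\rangle=1+O(1/n)$.
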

(Remark. (\ref{bcH}) means that $G$ is in the domain of $L_{Neu}.$)

\begin{proof} If $ d_0(f)=0 $ then
$d_\pi(f)=0$ since $f$ is either a periodic or antiperiodic
eigenfunction. Hence we can set $G=f$. Otherwise we set
$\tilde{G}(x)=d_0(\varphi)f(x)-d_0(f)\varphi(x)$. Then
$G=\tilde{G}/\|\tilde{G}\|$ satisfies (\ref{bcH}) because the
functions $f$ and $\phii$ are simultaneously periodic or
antiperiodic.

By (\ref{bcH}), $G \in Dom(L)\cap Dom (L_{Neu}), $ so we have
$L_{Neu}G=LG.$ Hence it follows
 \be \label{111111}
L_{Neu}G=aLf+bL\phii=a\lambda^+f+b(\lambda^+\phii-\gamma\ \phii+\xi
f)\ee
 $$ \quad \quad \quad \quad \quad \quad =\lambda^+(a
f+b\phii)+b(\xi f-\gamma \phii)=\lambda^+G+b(\xi f-\gamma \phii).$$

Fix a unit vector $g\in \mathcal{C}$ so that
 \be
  \label{accreditation}
   \langle G,\bar{g}\rangle= |\langle G,\bar{g}\rangle|,
 \ee
Passing to conjugates in the equation $-g^{\prime \prime} +v(x) g=
\nu g $ one can see that
 \be
 \label{de}
 L_{Neu}(\bar{v})\bar{g}=\bar{\nu}\bar{g}.
 \ee
Taking inner product of both sides of (\ref{111111}) with $\bar{g}$
we get
 \be \label{cln1} \langle L_{Neu}G,\bar{g}\rangle =
\lambda^+\langle G,\bar{g}\rangle+b(\xi\langle
f,\bar{g}\rangle-\gamma \langle \phii,\bar{g}\rangle).
 \ee
On the other hand, by (\ref{L-L}) and (\ref{de}),
we have
   \be \label{cln2} \langle L_{Neu}(v)G,\bar{g}\rangle =
   \langle G,(L_{Neu}(v))^*\bar{g}\rangle=
\langle G,L_{Neu}(\bar{v})\bar{g}\rangle
  = \nu \langle G,\bar{g}\rangle.
 \ee
  Now (\ref{cln1}) and (\ref{cln2}) imply (\ref{dere}).

Let $G^0=P^0G$ and  $\bar{g}^0=P^0_{Neu}\bar{g};$ then
$\|G^0\|,\|\bar{g}^0\| \leq 1$ since $P^0$ and $P^0_{Neu}$ are
orthogonal projections and $G$ and $\bar{g}$ are unit vectors.

We have
 $$
 \label{3IKA}
 \langle G,\bar{g}\rangle
=
 \langle G^0,\bar{g}^0\rangle +
\langle G^0,\bar{g}-\bar{g}^0\rangle + \langle G-G^0,
\bar{g}\rangle,
$$
so by the triangle and Cauchy inequalities it follows that
 $$
 \label{bodorey}
 |\langle G,\bar{g}\rangle| \geq |\langle G^0,\bar{g}^0\rangle|
 -\|\bar{g}-\bar{g}^0\|
- \|G-G^0\|.
 $$
By Proposition \ref{Proposition} we have
 \be \label{G-G0}
\|G-G^0\|=\|(P-P^0)G\| \leq \|P-P^0\| \leq \frac{M}{n}
 \ee
 and similarly
  \be
  \label{g-g0}
  \|\bar{g}-\bar{g}^0\|=
  \|(P_{Neu}(\bar{v})-P^0_{Neu})\bar{g}\|
  \leq \|P_{Neu}(\bar{v})-P^0_{Neu}\| \leq
\frac{M}{n}.
 \ee
  Hence, it follows that
 \be \label{oyleboyle}
  |\langle G,\bar{g}\rangle| \geq |\langle G^0,\bar{g}^0\rangle |
-\frac{2M}{n}.
 \ee
Next we estimate  $|\langle G^0,\bar{g}^0\rangle |$ from below in
order to get a lower bound for $|\langle G,\bar{g}\rangle |$. Since
$\mathcal{C}^0$ is spanned by $c_n(x)= \sqrt{2}\cos nx,$ $\bar{g}^0$
is of the form
 \be \label{g0}
 \bar{g}^0 = e^{i\theta}\|\bar{g}^0\|\bigg{(}\frac{1}{\sqrt{2}}e^{inx}+
 \frac{1}{\sqrt{2}}e^{-inx}\bigg{)}
 \ee
 for some $\theta \in [0,2\pi)$. Now let $G^0_1$ and $G^0_2$
 be the coefficients of $G^0$ in the basis
 $\{e^{inx},e^{-inx}\}$, i.e.,
 \be
 \label{nelabel}
 G^0(x)=G^0_1e^{inx}+G^0_2e^{-inx}.
 \ee
 Clearly $d_0(G^0)=in\,(G^0_1-G^0_2)$. Since
 $d_0(G)=0$, by Proposition \ref{Proposition} we also have
 \be |d_0(G^0)|=|d_0(G)-d_0(G^0)| \leq\|(D
 P-DP^0)G\|_{\infty} \leq M.
 \ee
 Hence we obtain
 \be
 \label{ne}
 |G^0_1-G^0_2| \leq \frac{M}{n}
 \ee
 and
 \be \label{nelabelne} |G^0_2| \leq |G^0_1|+ |G^0_1-G^0_2| \leq
 |G^0_1|+\frac{M}{n}.
  \ee
 From (\ref{G-G0}) it follows that
  $$
 \sqrt{|G^0_1|^2+|G^0_2|^2}=\|G^0\| \geq \|G\|-\|G-G^0\| \geq
 1-\frac{M}{n},
  $$
so by (\ref{nelabelne}) we get
 \be
 \label{nelabelnelabel}
 |G^0_1| \geq \frac{1}{\sqrt{2}}-\frac{2M}{n}.
 \ee
On the other hand (\ref{g0}) and (\ref{nelabel})  imply
 \be
 \label{g0G0}
 |\langle G^0,\bar{g}^0 \rangle|=\frac{1}{\sqrt{2}}\|\bar{g}^0\||G^0_1+G^0_2|
 \geq \frac{1}{\sqrt{2}}\|\bar{g}^0\|\big{(}2|G^0_1|-|G^0_1-G^0_2|\big{)}.
 \ee
 Combining (\ref{ne}), (\ref{nelabelnelabel}), (\ref{g0G0})
 and taking into account that
 $$\|\bar{g}^0\| \geq \|\bar{g}\|-\|\bar{g}-\bar{g}^0\| \geq
1-\frac{M}{n}$$ due to (\ref{g-g0}), we obtain
 \be
 |\langle G^0,\bar{g}^0 \rangle| \geq 1-\frac{6M}{n}
 \ee
 which, together with (\ref{oyleboyle}) and (\ref{accreditation}), implies
 \be
 \langle G,\bar{g}\rangle \geq 1-\frac{8M}{n}.
 \ee
Hence, for a sufficiently large $n$, $\langle G,\bar{g}\rangle \geq
71/72$.
 \end{proof}

 \begin{Corollary}
 \label{corollary}
 For sufficiently large $n$, we have
 \be
 \label{corr2} |\gamma_n|+|\delta^{Neu}_n| \leq 19
 \big{(}|\beta^+_n(z_n^*)|+|\beta^-_n(z_n^*)|\big{)}.
 \ee
 \end{Corollary}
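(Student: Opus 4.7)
The plan is to chain together the inequality from Lemma \ref{x+g-b+b} with the identity (\ref{dere}) and the estimate $\langle G,\bar{g}\rangle \ge 71/72$ from the lemma just proved. Everything needed is already in hand; only a few short estimates remain.

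First, I would take absolute values in the identity (\ref{dere}):
\[
\langle G,\bar{g}\rangle\,\delta^{Neu} \;=\; b\,\langle \varphi,\bar{g}\rangle\,\gamma \;-\; b\,\langle f,\bar{g}\rangle\,\xi.
\]
Since $G=af+b\varphi$ is a unit vector with $\{f,\varphi\}$ orthonormal, $|b|\le 1$. Since $\varphi,f,\bar{g}$ are all unit vectors, Cauchy--Schwarz gives $|\langle \varphi,\bar{g}\rangle|,|\langle f,\bar{g}\rangle|\le 1$. Combined with $\langle G,\bar{g}\rangle \ge 71/72$ for sufficiently large $n$, this yields
\[
|\delta^{Neu}_n| \;\le\; \frac{72}{71}\bigl(|\gamma_n|+|\xi_n|\bigr).
\]

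Next I would add $|\gamma_n|$ to both sides, producing
\[
|\gamma_n|+|\delta^{Neu}_n| \;\le\; \Bigl(1+\tfrac{72}{71}\Bigr)|\gamma_n| + \tfrac{72}{71}|\xi_n| \;\le\; \tfrac{143}{71}\bigl(|\gamma_n|+|\xi_n|\bigr).
\]
Finally, applying the upper bound $|\xi_n|+|\gamma_n|\le 9(|\beta^+_n(z_n^*)|+|\beta^-_n(z_n^*)|)$ from Lemma \ref{x+g-b+b} gives
\[
|\gamma_n|+|\delta^{Neu}_n| \;\le\; \tfrac{143}{71}\cdot 9\cdot \bigl(|\beta^+_n(z_n^*)|+|\beta^-_n(z_n^*)|\bigr) \;<\; 19\bigl(|\beta^+_n(z_n^*)|+|\beta^-_n(z_n^*)|\bigr),
\]
which is precisely (\ref{corr2}).

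There is essentially no obstacle here: all the real work was done in establishing Lemma \ref{x+g-b+b} and in proving that $\langle G,\bar{g}\rangle$ is close to $1$, which depended on the delicate estimate $\|P_n-P^0_n\|_{2\to\infty}\le M/n$ from Proposition \ref{Proposition}. The only thing to be careful about is to bound $|b|$ and the inner products $|\langle \varphi,\bar{g}\rangle|, |\langle f,\bar{g}\rangle|$ by $1$ rather than trying to extract any finer cancellation, and to track the numerical constants so that the final coefficient lands below $19$.
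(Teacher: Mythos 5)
Your proof is correct and follows exactly the route of the paper's own proof: take absolute values in (\ref{dere}), bound $|b|$ and the inner products by $1$, use $\langle G,\bar g\rangle\ge 71/72$ to get $|\delta^{Neu}_n|\le\tfrac{72}{71}(|\xi_n|+|\gamma_n|)$, and then combine with the upper bound from Lemma \ref{x+g-b+b}; your arithmetic $\tfrac{143}{71}\cdot 9\approx 18.13<19$ is right.
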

\begin{proof}
Using (\ref{dere}), (\ref{4950}) and noting also that the absolute
values of $b$ and all inner products in the right-hand side of
(\ref{dere}) do not exceed $1$ we get $|\delta^{Neu}| \leq
72/71\big{(}|\xi|+|\gamma|\big{)}.$ This inequality, together with
Lemma \ref{x+g-b+b}, implies (\ref{corr2}).
 \end{proof}

Corollary \ref{corollary} proves the second inequality in
(\ref{A10}). In order to complete the proof of Theorem~\ref{Asymp} it
remains to prove the first inequality in (\ref{A10}).

By Proposition 34 in \cite{DMH}, if
 \be \label{case1}
  \textit{Case 1}: \quad \quad \quad \frac{1}{4}|\beta^-(z^+)| \leq
 |\beta^+(z^+)| \leq 4 |\beta^-(z^+)|,\quad\quad\quad\quad\quad
 \ee
 then we have
 \be
 \label{case11}
 |\beta^+(z^*)|+|\beta^-(z^*)| \leq 2|\gamma|.\quad \quad
 \ee
 Next we consider the complementary cases
$$
 \textit{Case 2(a)}:\quad 4|\beta^+(z^+)| < |\beta^-(z^+)|\quad
 \text{or} \quad \textit{Case 2(b)}:\quad 4|\beta^-(z^+)| <
 |\beta^+(z^+)|.$$
\begin{Lemma}
\label{LLL} If {\em Case 2(a)} or {\em Case 2(b)} holds, then we
have, for sufficiently large $n,$
 \be \label{LLLL}
\frac{1}{4} \leq \frac{|d_0(f)|}{|d_0(\phii)|} \leq 4.
 \ee
 \end{Lemma}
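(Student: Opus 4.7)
The plan is to apply the Lyapunov--Schmidt reduction to replace $f$ and $\varphi$ by their $L^2$-orthogonal projections onto $\mathcal{E}_n^0=\mathrm{span}\{e^{inx},e^{-inx}\}$, read off the derivatives at $0$ from the resulting Fourier coefficients, and control the residual errors using Proposition~\ref{Proposition}.

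First, I would write $P^0f=f_1e^{inx}+f_2e^{-inx}$ and $P^0\varphi=\varphi_1e^{inx}+\varphi_2e^{-inx}$. Since $Lf=(n^2+z^+)f$, the Lyapunov--Schmidt reduction (Lemma~21 in \cite{DM15}, applicable here because every mean-zero $v\in L^p([0,\pi])$ is the derivative of a $\pi$-periodic primitive and hence lies in $H^{-1}_{\mathrm{per}}(\mathbb{R})$, as noted right after Theorem~\ref{Asymp}) says that $(f_1,f_2)$ is an eigenvector of $\bigl(\begin{smallmatrix}\alpha(z^+)&\beta^+(z^+)\\ \beta^-(z^+)&\alpha(z^+)\end{smallmatrix}\bigr)$ for eigenvalue $z^+$. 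The two rows $(z^+-\alpha(z^+))f_1=\beta^+(z^+)f_2$ and $\beta^-(z^+)f_1=(z^+-\alpha(z^+))f_2$ multiply to yield $\rho:=f_2/f_1$ satisfying $\rho^2=\beta^-(z^+)/\beta^+(z^+)$; hence $|\rho|>2$ in Case~2(a) and $|\rho|<1/2$ in Case~2(b). Combining $\langle f,\varphi\rangle=0$, $\|f\|=\|\varphi\|=1$, with $\|(I-P^0)f\|_2,\,\|(I-P^0)\varphi\|_2=O(1/n)$ (inherited from Proposition~\ref{Proposition} via $\|\cdot\|_{2\to 2}\le\sqrt{\pi}\,\|\cdot\|_{2\to\infty}$), one finds that $(\varphi_1,\varphi_2)\perp(f_1,f_2)$ in $\mathbb{C}^2$ up to $O(1/n)$, giving $\varphi_1/\varphi_2=-\overline{\rho}+O(1/n)$ and $|\varphi_2|=|f_1|+O(1/n)$.

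Now $d_0(e^{\pm inx})=\pm in$, so $d_0(P^0f)=in\,f_1(1-\rho)$ and $d_0(P^0\varphi)=-in\,\varphi_2(1+\overline{\rho})$; using $|1+\overline{\rho}|=|1+\rho|$ and $|f_1|=|\varphi_2|+O(1/n)$, the leading-order ratio equals $|1-\rho|/|1+\rho|$. A short triangle-inequality argument gives $\tfrac{|\rho|-1}{|\rho|+1}\le\tfrac{|1-\rho|}{|1+\rho|}\le\tfrac{|\rho|+1}{|\rho|-1}$ for $|\rho|\ge 2$, and the symmetric statement for $|\rho|\le 1/2$; both extremes lie in $[1/3,3]$. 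Finally, Proposition~\ref{Proposition} gives $|d_0(f)-d_0(P^0f)|\le\|D(P_n-P^0_n)\|_{2\to\infty}\le M$ and similarly for $\varphi$, while the normalization $|f_1|^2+|f_2|^2=1/\pi+O(1/n^2)$ together with $|\rho|\ge 2$ (or $|\rho|\le 1/2$) yields $|d_0(P^0f)|\ge n\,\bigl||f_1|-|f_2|\bigr|\ge n/\sqrt{5\pi}$, and likewise for $\varphi$. Hence the constant error $M$ is dominated by the linear-in-$n$ main terms, and for $n$ sufficiently large the leading bound $[1/3,3]$ widens only to $[1/4,4]$.

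The principal obstacle is the first step: extracting the clean identity $\rho^2=\beta^-(z^+)/\beta^+(z^+)$ by invoking the Lyapunov--Schmidt framework of \cite{DM15,DMH} in the present $L^p$ setting. Once that is in place, the remainder is routine; the only care required is to track the $O(1/n)$ perturbation errors carefully enough that $[1/3,3]$ relaxes precisely to $[1/4,4]$ rather than further.
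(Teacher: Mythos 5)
Your proof is correct and tracks the paper's argument at the conceptual level, but it obtains the key intermediate bound by a genuinely self-contained route rather than by citation. The paper simply imports the explicit inequalities on $|f^0_1|,|f^0_2|,|\varphi^0_1|,|\varphi^0_2|$ (roughly $2/\sqrt{5}$ and $1/\sqrt{5}$ up to $\kappa_n\to 0$) from the proof of Lemma~64 in \cite{DM15}, noting that their ingredients (Lemma~21 and Proposition~11 there, i.e.\ Lemma~6 and Proposition~44 in \cite{DMH}) persist for $v\in H^{-1}_{per}$. You instead rederive the ratio from scratch: the Lyapunov--Schmidt $2\times 2$ eigenvector equations give $(f_2/f_1)^2=\beta^-(z^+)/\beta^+(z^+)$, Case~2(a)/(b) pins $|\rho|$ outside $[1/2,2]$, and the near-orthogonality of $(f_1,f_2)$ and $(\varphi_1,\varphi_2)$ (forced by $\langle f,\varphi\rangle=0$ and the $O(1/n)$ tails) makes the leading ratio $|1-\rho|/|1+\rho|\in[1/3,3]$. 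The error control via Proposition~\ref{Proposition} and the linear-in-$n$ lower bounds on $|d_0(P^0f)|,|d_0(P^0\varphi)|$ is then the same as in the paper. Two things you should watch: first, there is a sign/labeling discrepancy --- your $\rho^2=\beta^-/\beta^+$ would give $|f_2|>2|f_1|$ in Case~2(a), whereas the inequalities the paper quotes from \cite{DM15} have $|f^0_1|>2|f^0_2|$; this is an innocuous swap of conventions (the quantity $|1-\rho|/|1+\rho|$ is invariant under $\rho\mapsto 1/\rho$, and the final ratio is symmetric), but it should be reconciled with the precise conventions of \cite{DM15,DMH} before presenting the argument. Second, the identity $\rho^2=\beta^-/\beta^+$ degenerates when $z^+=\alpha(z^+)$ (i.e.\ $\gamma_n=0$, which under Case~2(a) means $\beta^+=0$): then $f_1=0$ and $\rho=\infty$, so the ratio $|1-\rho|/|1+\rho|\to 1$; this limiting case needs a remark, whereas the paper's imported inequalities cover it automatically. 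Neither issue is fatal; your route is valid and yields a slightly sharper leading constant ($[1/3,3]$ vs.\ the paper's implicit $[1/\sqrt{12},\sqrt{12}]$) before the $M/n$ relaxation to $[1/4,4]$.
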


\begin{proof} We consider only  \textit{Case 2(a)}, since the proof \textit{Case 2(b)} is
 similar. Let $f^0=P^0f$, $\phii^0=P^0\phii$ and let
$ f^0=f^0_1 e^{inx}+f^0_2 e^{-inx} $ and $ \phii^0=\phii^0_1
e^{inx}+\phii^0_2 e^{-inx}. $
 In \textit{Case 2(a)}, if $v \in L^2([0,\pi])$
 it was shown in the proof of Lemma 64 in \cite{DM15} that the following
 inequalities hold (inequalities (4.51), (4.52), (4.54), and (4.55) in \cite{DM15}):
 \be
 \label{6-f}
 |f^0_1| \geq \frac{2}{\sqrt{5}}-2\kappa_n,
 \quad|f^0_2| \leq \frac{1}{\sqrt{5}},\quad |\phii^0_1| \leq \frac{1}{\sqrt{5}}+\kappa_n,
 \quad|\phii^0_2|\geq
 \frac{2}{\sqrt{5}}-2\kappa_n,
 \ee
where $\kappa_n$ is a sequence converging to zero. These
inequalities were derived using Lemma 21 and Proposition 11 in
\cite{DM15} which still hold in the case where $v \in
H^{-1}([0,\pi])$, including our case $v\in L^p([0,\pi])$ (See Lemma
6 and Proposition 44 in \cite{DMH}). Hence we can safely use them.

Note that $d_0(f^0)=in(f^0_1-f^0_2)$. Using (\ref{6-f}) we get
 \be
 |d_0(f^0)| \geq n(|f^0_1|-|f^0_2|) \geq
 n\bigg{(}\frac{1}{\sqrt{5}}-2\kappa_n\bigg{)} \geq
 \frac{n}{\sqrt{6}}
 \ee
for sufficiently large $n$. On the other hand we have
 \be
 |d_0(f^0)| \leq n(|f^0_1|+|f^0_2|) \leq n\sqrt{2}\|f^0\| \leq
 \sqrt{2}n
 \ee
Following the same argument for $d_0(\phii^0)$, we have both
 \be
  \frac{n}{\sqrt{6}} \leq |d_0(f^0)| \leq \sqrt{2}n\quad\text{and}\quad
  \frac{n}{\sqrt{6}} \leq |d_0(\phii^0)| \leq \sqrt{2}n.
 \ee
On the other hand by Proposition \ref{Proposition} we have
 \be
 |d_0(f)-d_0(f^0)| \leq M \quad \text{and} \quad |d_0(\phii)-d_0(\phii^0)| \leq M.
 \ee
 Hence, for sufficiently large $n$'s, we get
 \be
 \frac{|d_0(f)|}{|d_0(\phii)|} \leq
 \frac{|d_0(f^0)|+M}{|d_0(\phii^0)|-M}\leq
 \frac{\sqrt{2}n+M}{n/\sqrt{6}-M}\leq 4
 \ee
 and
 \be
 \frac{|d_0(f)|}{|d_0(\phii)|} \geq
 \frac{|d_0(f^0)|-M}{|d_0(\phii^0)|+M}\geq
 \frac{n/\sqrt{6}-M}{\sqrt{2}n+M}\geq \frac{1}{4}.
 \ee
 \end{proof}

\begin{Proposition}
 \label{prop2}
For sufficiently large $n$, we have
 \be
 \label{corr} \big{(}|\beta^+_n(z_n^*)|+|\beta^-_n(z_n^*)|\big{)} \leq
 80 (|\gamma_n|+|\delta^{Neu}_n|)
 \ee
 \end{Proposition}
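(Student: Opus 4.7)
The plan is to handle Case 1 immediately and then attack Cases 2(a) and 2(b) by exploiting the structure of the decomposition $G=af+b\phii$ constructed in the preceding lemma. Case 1 is essentially free: Proposition 34 of \cite{DMH} gives (\ref{case11}), i.e.\ $|\beta^+(z^*)|+|\beta^-(z^*)|\leq 2|\gamma|$, which already implies (\ref{corr}) with room to spare.

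For Case 2(a)/2(b) the key identity is (\ref{dere}), which I would rewrite as
\[
\xi\,b\langle f,\bar{g}\rangle \;=\; b\langle \phii,\bar g\rangle\gamma \;-\; \langle G,\bar g\rangle\,\delta^{Neu}.
\]
If $|b\langle f,\bar g\rangle|$ can be bounded from below by a positive constant for large $n$, then since the remaining inner products and $|b|$ are bounded above by $1$, this identity yields $|\xi|\leq C(|\gamma|+|\delta^{Neu}|)$ with an explicit $C$. Combining with $|\gamma|\leq|\gamma|+|\delta^{Neu}|$ and applying Lemma \ref{x+g-b+b} (which converts a bound on $|\xi|+|\gamma|$ into a bound on $|\beta^+|+|\beta^-|$ with a factor of $5$) then produces (\ref{corr}).

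The lower bound on $|b\langle f,\bar g\rangle|$ splits into two parts. For $|b|$: from the construction $G=af+b\phii$ one has $a\,d_0(f)+b\,d_0(\phii)=0$ together with orthonormality $|a|^2+|b|^2=1$, and Lemma \ref{LLL} forces $|a|/|b|=|d_0(\phii)|/|d_0(f)|\in[1/4,4]$, hence $|b|\geq 1/\sqrt{17}$. For $|\langle f,\bar g\rangle|$: I would decompose $\langle f,\bar g\rangle=\langle f^0,\bar g^0\rangle+\langle f^0,\bar g-\bar g^0\rangle+\langle f-f^0,\bar g\rangle$ with $f^0=P^0f$ and $\bar g^0=P^0_{Neu}\bar g$; the last two terms are $O(1/n)$ by Proposition \ref{Proposition}, exactly as in the estimates (\ref{G-G0})--(\ref{g-g0}). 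For the main term, using the explicit expansions $f^0=f^0_1 e^{inx}+f^0_2 e^{-inx}$ and (\ref{g0}), one computes $|\langle f^0,\bar g^0\rangle|=(\|\bar g^0\|/\sqrt 2)|f^0_1+f^0_2|$, and then invokes the inequalities (\ref{6-f}) (with the obvious symmetric swap in Case 2(b)) to conclude $|f^0_1+f^0_2|\geq|f^0_1|-|f^0_2|\geq 1/\sqrt 5-2\kappa_n$, which is uniformly bounded below for large $n$.

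The main obstacle is bookkeeping rather than any new technical idea: each of the ingredients (the factor $1/\sqrt{17}$ from $|b|$, the factor $1/\sqrt{10}$ from $|\langle f^0,\bar g^0\rangle|$, the $O(1/n)$ resolvent corrections, and the factor $5$ from Lemma \ref{x+g-b+b}) contributes multiplicatively, and one must track these through carefully enough to reach the advertised constant $80$. I also note that one should verify that Case 2(b) really is a mirror image — this hinges on the fact that the inequalities (\ref{6-f}) come with a symmetric counterpart when the roles of $e^{\pm inx}$ are exchanged, which follows from the same Lyapunov--Schmidt computation as in Lemma 64 of \cite{DM15}.
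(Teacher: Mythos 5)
Your proposal is correct and follows the paper's overall strategy: Case 1 is dispatched by (\ref{case11}), the identity (\ref{dere}) is rearranged to extract $|\xi|$ as in (\ref{yokoyleyok}), a positive lower bound on $|b||\langle f,\bar{g}\rangle|$ is established, and Lemma~\ref{x+g-b+b} closes the argument. The one place you diverge is the lower bound on $|b||\langle f,\bar{g}\rangle|$. The paper never returns to the Fourier level: since $\{f,\varphi\}$ is orthonormal, $\langle f,G\rangle=\bar a$ exactly, hence
\[
|b|\,|\langle f,\bar g\rangle|\ \ge\ |a||b|-\|\bar g-G\|\ \ge\ \tfrac{4}{17}-\tfrac16\ >\ \tfrac1{15},
\]
where $\|\bar g-G\|\le 1/6$ is an immediate consequence of the already-proved $\langle G,\bar g\rangle\ge 71/72$ in (\ref{4950}), and $|a||b|\ge 4/17$ comes from Lemma~\ref{LLL}. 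You instead bound $|b|\ge 1/\sqrt{17}$ and $|\langle f,\bar g\rangle|\ge 1/\sqrt{10}-o(1)$ separately, the latter by repeating the $f^0,\bar g^0$ expansion and re-invoking the inequalities (\ref{6-f}). This is legitimate and in fact yields the slightly stronger asymptotic bound $|b||\langle f,\bar g\rangle|\ge 1/\sqrt{170}-o(1)$, so the bookkeeping closes at $75$ rather than $80$. The trade-off: your route re-derives a Fourier-coefficient estimate for $\langle f^0,\bar g^0\rangle$ that the paper sidesteps by reusing (\ref{4950}) plus the algebraic fact $\langle f,G\rangle=\bar a$; on the other hand, your version makes it more transparent exactly why $\langle f,\bar g\rangle$ is bounded away from zero, namely because the dominant Fourier mode of $f$ has a nonzero projection onto $\cos nx$.
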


\begin{proof} In view of
 (\ref{case11}), it remains to prove (\ref{corr}) if
 \textit{Case 2(a)} or \textit{Case 2(b)} holds.

 Now (\ref{dere}) implies that
 \be \label{yokoyleyok}
 |b||\langle f,\bar{g}\rangle | |\xi| \leq |
 \delta^{Neu}|+|\gamma|.
 \ee
 Thus, in order to estimate $|\xi|$ from above by
 $|\delta^{Neu}|+|\gamma|$ we need to
 find a lower bound to $|b||\langle f,\bar{g}\rangle |$. We have
 \be
 \label{aydiaydi}
 |b||\langle f,\bar{g}\rangle |=|b|\big{|}\langle f,G \rangle+\langle f,\bar{g}-G
 \rangle\big{|}\geq |a||b|-\|\bar{g}-G\|
 \ee
since $\|f\|=1$, $|b| \leq 1$ and $\langle f,G \rangle= \bar{a}.$ In
view of (\ref{4950})
 \be
||\bar{g}-G||^2=||\bar{g}||^2+||G||^2-2 Re \,
\langle\bar{g},G\rangle=2-2\langle\bar{g},G\rangle \leq
\frac{1}{36},
 \ee
  hence
 \be \label{uwl} \|\bar{g}-G\| \leq\frac{1}{6}. \ee
On the other hand, by the construction of $G$ we know $|b/a|=
|d_0(f)/d_0(\phii)|,$ so Lemma \ref{LLL} implies that $1/4\leq |b/a|
\leq 4.$ Since $|a|^2+|b|^2=1$, a standard calculus argument shows
that
 \be \label{sar1}
 |a||b| \geq\frac{4}{17}.
 \ee
In view of (\ref{uwl}) and (\ref{sar1}), the right-hand side of
(\ref{aydiaydi}) is not less than $4/17-1/6 > 1/15,$ i.e.,
$|b||\langle f,\bar{g}\rangle |>1/15.$ Hence, by (\ref{yokoyleyok}),
it follows that
 \be \label{bsbsb}
 |\xi| \leq 15 \big{(} |\delta^{Neu}|+|\gamma| \big{)}.
 \ee
Now we complete the proof combining (\ref{bsbsb}) and Lemma
\ref{x+g-b+b}. \end{proof}

Corollary \ref{corollary} and Proposition \ref{prop2} show that
(\ref{A10}) holds, so Theorem~\ref{Asymp} is proved.

\section*{Acknowledgement}
The author wishes to express his gratitude to Prof. Plamen Djakov
who suggested the problem and offered invaluable assistance, support
and guidance.

\end{document}